\title{On the relaxed greedy deterministic row and column iterative methods}
\author{Nian-Ci Wu\footnotemark[1]~,~\
Ling-Xia Cui\footnotemark[2]
\and
Qian Zuo\footnotemark[2]\  \footnotemark[3]}
\begin{document}

\maketitle
\renewcommand{\thefootnote}{\fnsymbol{footnote}}
\footnotetext[1]{School of Mathematics and Statistics, South-Central University for Nationalities, Wuhan 430074, China.}
\footnotetext[2]{School of Mathematics and Statistics, Wuhan University, Wuhan 430072, China.}
\footnotetext[3]{Corresponding author. E-mail addresses: {\sf nianciwu@scuec.edu.cn}, {\sf lxcui@whu.edu.cn} and {\sf zuoqian@whu.edu.cn}.}
\renewcommand{\thefootnote}{\arabic{footnote}}

\begin{abstract}
For solving the large-scale linear system by iteration methods, we utilize the Petrov-Galerkin conditions and relaxed greedy index selection technique, and provide two relaxed greedy deterministic row (RGDR) and column (RGDC) iterative methods, in which one special case of RGDR reduces to the fast deterministic block Kaczmarz method proposed in Chen and Huang (Numer. Algor., 89: 1007-1029, 2021). Our convergence analyses reveal that the resulting algorithms all have the linear convergence rates, which are bounded by the explicit expressions. Numerical examples  show that the proposed algorithms are more effective than the relaxed greedy randomized row and column iterative methods.
\end{abstract}

\begin{keywords}
Petrov-Galerkin conditions, relaxed greedy selection, row and column methods, convergence analysis
\end{keywords}

\pagestyle{myheadings}
\thispagestyle{plain}
\markboth{On the relaxed greedy deterministic row and column iterative methods}{Nian-Ci Wu, Ling-Xia Cui and Qian Zuo}

\section{Introduction}\label{sec1}\noindent
Let $A$ be a real $m$-by-$n$ ($m\geq n$) matrix and $\bb$ be a real $m$-dimensional right-hand side. We consider the iterative solution for the system of linear equations
\begin{equation}\label{eq:Ax=b}
  A\bx= \bb, ~{\rm with}~A\in\Rc^{m\times n}~{\rm and}~\bb\in\Rc^{m},
\end{equation}
where $\bx$ is the $n$-dimensional unknown vector.

Projection algorithm \cite{Saad2000} is a kind of classic while effective iterative solver for computing an approximate solution for \eqref{eq:Ax=b}. As we know, many existing practical projection iterative algorithms are under the framework of Petrov-Galerkin conditions, such as the Kaczmarz and coordinate descent (CD) methods, see \cite{15GR,Saad2000,20WX3,20WX4,XZ17} and the references therein. Let the constrained subspace and  the search subspace correspond to $\mathcal{L}={\rm span}\{ Y\}$ and  $\mathcal{K}= {\rm span}\{ Z\}$, respectively, where $Y$ and $Z$ are two parameter matrices. It leads  to the following iteration scheme \cite{15GR,XZ17}.
\begin{equation}\label{eq:PG-iterate}
\bx^{(k+1)}  = \bx^{(k)} + Z(Y^T A Z)^\dag Y^T (\bb-A\bx^{(k)})
\end{equation}
for $k=0,1,2,\cdots$. By varying the parameter matrices $Y$ and $Z$,
the above two well-known single row and column methods are recovered as follows.
\begin{enumerate}[$\diamond$]
\setlength{\itemindent}{0.5cm}
\addtolength{\itemsep}{-0.1em} 
\item Kaczmarz method ($Y=\bmu_i$, $Z=A^TY$, and $i=1,2,\cdots,m$):
\begin{equation}\label{eq:Kacz-iterate}
  \bx^{(k+1)}  = \bx^{(k)} + \frac{\bmu_i^T(\bb - A\bx^{(k)})}{\BT{\bal_i}}\bal_i,
\end{equation}
where $\bmu_i^T$ and $\bal_i^T$  are the $i$-th rows of the  identity matrix with size $m$ and $A$, respectively;\vskip 1.25ex
\item CD method ($ Z=\bnu_j$, $Y=AZ$, and $j=1,2,\cdots,n$):
\begin{equation}\label{eq:CD-iterate}
\bx^{(k+1)}  = \bx^{(k)} + \frac{\bbe_j^T(\bb - A\bx^{(k)})}{\BT{\bbe_j}}\bnu_j,
\end{equation}
where $\bnu_j $ and $\bbe_j$  are the $j$-th columns of the  identity matrix with size $n$ and $A$, respectively.
\end{enumerate}

Randomization has several benefits, for example, the resulting algorithm is easy to analyze, simple to implement, and often effective in practice \cite{19BW, 19GMMN, 10LL, 20LZ, 20ZG}. The randomized methods were proposed \cite{93HM}, but with no explicit proofs of convergence until the celebrated paper \cite{SV09},  where Strohmer and Vershynin  proved that the randomized Kaczmarz (RK) method converges linearly in expectation, with a rate directly related to geometric properties of the matrix $A$. The randomized CD (RCD) method was given by Leventhal and  Lewis \cite{10LL}.

 One  key ingredient of guaranteeing fast convergence for RK and RCD is to construct an appropriate probability criterion used to select  $\bal_i^T$ and $\bbe_j$ from $A$, respectively. The promising adaptive index selection was introduced in  Bai and Wu's series of work \cite{18BW1,18BW2,19BW},  where the authors  applied it to RK and RCD, and presented the greedy RK (GRK \cite{18BW1}, RGRK \cite{18BW2}) and RCD (GRCD \cite{19BW}) methods.
The relaxed version of GRCD (RGRCD) is given by \cite{20ZG}.  The relaxed greedy row and column selection strategies were enlarged to more general cases by Gower et al. \cite{19GMMN}.

The adaptive index selection immediately  results in many papers devoted to various aspects of the new algorithm \cite{20DSS, 20LZ, 19Nec, 20NZ}.
Li and Zhang gave a greedy block Gauss-Seidel method, where all the coordinates satisfy a greedy rule \cite{20LZ}.  Based on the block Kaczmarz iteration scheme \cite{15GR, XZ17} and a modified greedy row selection from \cite{18BW1, 18BW2}, the greedy block Kaczmarz (GBK) was proposed by Niu and Zheng \cite{20NZ}. This method needs to calculate the Moore-Penrose pseudoinverse of the row submatrix. Several pseudoinverse-free methods can be found in \cite{20DSS,20LZ,19Nec}. Recently, a fast deterministic block Kaczmarz (FDBK) method, according to the greedy criterion of row selections, was presented by Chen and Huang \cite{21CH}.

In this work, we utilize the Petrov-Galerkin conditions \cite{Saad2000} and relaxed greedy index selection technique \cite{18BW2,20ZG}, and present several new relaxed greedy deterministic row and column methods. The paper is organized as follows. In the rest of this section, we give some notation. In Section \ref{sec:RGRR+RGRC} we briefly recap the classical relaxed greedy randomized row and column methods for solving the linear system.  In Section \ref{sec:RGDR+RGDC} we present two relaxed greedy deterministic row and column methods together with their convergence theories.  In Section \ref{sec:ER} some numerical examples are provided to demonstrate the theoretical results.

{\bf Notation.} For an integer $n\geq 1$, let $ [n]:=\{1,\cdots,n\}$.  The identity matrix of size $n$ is given by $I_n$. Throughout the paper, all vectors are assumed to be column vectors.
For any matrix $M$,  we use $M^T$, $\Rc(M)$, $\Nc(M)$, $\sigma_{\max}(M)$, $\sigma_{\min}(M)$, $M_{i, :}$, $M_{:,j}$ and $M_{ij}$ to denote the transpose, the range, the null space, the largest,  the smallest nonzero singular values, the $i$-th row, $j$-th column and the $(i,j)$-th entry,  respectively.
We remark that $M_{I_k, :}$ and $M_{:, J_k}$ stand for the row and column submatrices of $M$ indexed by index sets $I_k$ and $J_k$, respectively.
For any vector $\bx$,  we use $\bx^T$ and $x_i$ to denote the transpose and the $i$-th entry, respectively. We can uniquely write a vector $\bb\in \Rc^m$ as $\bb= \bb_{\textsc{r}} + \bb_{\textsc{n}}$,  where $\bb_{\textsc{r}}=AA^{\dag}\bb$ and $\bb_{\textsc{n}}=(I_m-AA^{\dag})\bb$ are the projections of $\bb$ onto $\Rc(A)$ and $\Nc(A^T)$, respectively. We use $|\Omega|$ to denote the cardinality of a set $\Omega$.

\section{The relaxed greedy randomized row and column methods}\label{sec:RGRR+RGRC}
In this section, we give a brief description of the relaxed greedy randomized row and column methods, such as RGRK \cite{18BW2}  and RGRCD \cite{20ZG}.

We assume that there exists an $\bx^{\ast}$ satisfying $ A\bx^{\ast}= \bb$.  By the Pythagorean theorem, the squared errors of Kaczmarz and CD severally satisfy
\begin{align*}
  \BT{\bx^{(k+1)} - \bx^{\ast}}
   &= \BT{\bx^{(k)} - \bx^{\ast}} -\frac{|b_i - \bal_i^T \bx^{(k)}|^2}{\BT{\bal_i}} \\
   &:= \BT{\bx^{(k)} - \bx^{\ast}} - \psi_i(\bx^{(k)})
\end{align*}
and
\begin{align*}
  \BT{A\bx^{(k+1)} - A\bx^{\ast}}
  & = \BT{A\bx^{(k)} - A\bx^{\ast}} -\frac{|\bbe_j^T(\bb - A\bx^{(k)})|^2}{\BT{\bbe_j}}\\
  &:= \BT{A\bx^{(k)} - A\bx^{\ast}} -\varphi_j(\bx^{(k)}),
\end{align*}
where  $\psi_i(\bx^{(k)})$ and $\varphi_j(\bx^{(k)})$ are the losses of indices $i\in [m]$ and $j\in [n]$, respectively. This implies that we may  select the row index $i$ or column index $j$ such that the corresponding loss is as large as possible.

The main idea of adaptive index selection strategies, given by Bai et al.  \cite{18BW1,18BW2,19BW} and Zhang et al. \cite{20ZG},  is to choose the index in a compromise way, whose loss is the convex combination of the largest and mean ones, which is described as follows.
\vskip 0.15cm
\noindent{\bf The relaxed greedy row and column selection} \cite{18BW1,18BW2,19BW,20ZG}.
{\it For $k=0,1,2,\cdots$, introducing two relaxation parameters $\theta_1,\theta_2\in [0,1]$, the row and column index sets are separately determined by
\begin{equation}\label{eq:RGRS}
  U_k=\left\{i_k \Big| \psi_{i_k}(\bx^{(k)}) \geq
  \theta_1\cdot \max_{i\in [m]}\left\{ \psi_i(\bx^{(k)}) \right\}
  +(1-\theta_1)\cdot \sum_{i\in [m]} \omega_i \psi_{i}(\bx^{(k)}), i_k\in [m]\right\}
\end{equation}
with $\omega_i=\bT{\bal_i}^2 / \BF{A}$ and
\begin{equation}\label{eq:RGCS}
  V_k=\left\{j_k \Big| \varphi_{j_k}(\bx^{(k)}) \geq
  \theta_2\cdot \max_{j\in [n]}\left\{ \varphi_j(\bx^{(k)}) \right\}
  +(1-\theta_2)\cdot \sum_{j\in [n]} \varpi_j \varphi_{j}(\bx^{(k)}), j_k\in [n]\right\},
\end{equation}
with $\varpi_j = \bT{\bbe_j}^2 / \BF{A}$.}
\vskip 0.15cm

\noindent Then, the RGRK and RGRCD methods can be list as follows. For more details and their implementations, we refer to \cite{18BW2,20ZG}.

\vskip 0.15cm
\noindent{\bf The RGRK method} \cite[Method 1]{18BW2}.  {\it Given an initial vector $\bx^{(0)} \in \Rc^{n}$ and a relaxation  parameter $\theta_1$, for $k=0,1,2,\cdots$, until the iteration sequence $\left\{\bx^{(k )}\right\}_{k=0}^{\infty}$ converges,
we first construct the index set $U_k$  according to formula \eqref{eq:RGRS}  and then update $\bx^{(k+1)}$ in accordance with
 \begin{equation*}
 \bx^{(k+1)}  = \bx^{(k)} + \frac{b_{i_k} - \bal_{i_k}^T \bx^{(k)}}{\BT{\bal_{i_k}}}\bal_{i_k},
\end{equation*}
where the row index  $i_k$ is from $U_k$ and selected with probability
\begin{equation*}
 \bP({\rm Row} = i_k) = \frac{ \big| \tilde{r}^{(k)}_{i_k} \big|^2 }{\BT{{ \tilde{\bm r}}^{(k)}}}
 \quad with \quad
 \tilde{r}^{(k)}_{i} =
 \left\{
 \begin{array}{cll}
   b_{i}-\bal_i^T \bx^{(k)}&, & if~i\in    U_k,\\
   0                       &, & if~i\notin U_k.
 \end{array}
 \right.
\end{equation*}
}

\vskip 0.15cm

\noindent{\bf The RGRCD method} \cite[Algorithm 2]{20ZG}.  {\it Given an initial vector $\bx^{(0)} \in \Rc^{n}$ and a relaxation  parameter $\theta_2$, for $k=0,1,2,\cdots$, until the iteration sequence $\left\{\bx^{(k )}\right\}_{k=0}^{\infty}$ converges,
we first construct the index set $V_k$  according to formula \eqref{eq:RGCS}  and then update $\bx^{(k+1)}$ in accordance with
 \begin{equation*}
 \bx^{(k+1)}  = \bx^{(k)} + \frac{\bbe_{j_k}^T(\bb - A\bx^{(k)})}{\BT{\bbe_{j_k}}}\bnu_{j_k},
\end{equation*}
where the  column index  $j_k$ is from $V_k$ and selected with probability
\begin{equation*}
 \bP({\rm Col} = j_k) = \frac{ \big| \tilde{s}^{(k)}_{j_k} \big|^2 }{\BT{{\tilde{\bm s}}^{(k)}}}
 \quad with \quad
 \tilde{s}^{(k)}_{j} =
 \left\{
 \begin{array}{cll}
   \bbe_{j}^T(\bb - A\bx^{(k)})&, & if~j\in     V_k,\\
   0                           &, & if~j\notin  V_k.
 \end{array}
 \right.
\end{equation*}
}
\vskip 0.15cm
\noindent  It was shown that the mean squared errors in RGRK and RGRCD admit the following estimates.

\vskip 1em

\begin{theorem}\label{thm:RGRK}(\cite[Theorem 2.1]{18BW2})
Let the linear system $A\bx=\bb$,  with the coefficient matrix $A \in \Rc^{m\times n}$ and the right-hand side $\bb \in \Rc^{m}$, be consistent. Then  the iteration sequence $\left\{\bx^{(k )}\right\}_{k=0}^{\infty}$, generated by the  RGRK  method starting from an initial guess  in the column space of $A^T$, converges to the unique least norm solution $\bx^{\ast}=A^{\dag}\bb$ in expectation. Moreover, for $k\geq 1$, the solution error in expectation for the iteration sequence $\left\{\bx^{(k )}\right\}_{k=0}^{\infty}$ obeys
\begin{equation} \label{eq:thm-RGRK}
\bE \BT{\bx^{(k+1)} - A^\dag \bb}\leq
\left(1 - \tau
\frac{\sigma_{\min}^2(A)}{\BF{A}}   \right)
\bE \BT{\bx^{(k)} - A^\dag \bb},
\end{equation}
 where
 $\tau  = \theta_1 \BF{A}/\epsilon   + (1-\theta_1)$ and
$\epsilon  = \BF{A} - \min_{i\in [m]}\left\{\BT{\bal_i}\right\}$.
\end{theorem}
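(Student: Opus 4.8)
The plan is to run the standard one-step conditional-expectation analysis of relaxed greedy randomized Kaczmarz, isolating one structural fact about Kaczmarz iterates that is responsible both for the factor $\tau>1$ (when $\theta_1>0$) and for the restriction to $k\ge 1$. \textbf{Step 1 (invariant subspace).} I would first show by induction on $k$ that $\bx^{(k)}-A^{\dag}\bb\in\Rc(A^T)$: this holds at $k=0$ because $\bx^{(0)}\in\Rc(A^T)$ by hypothesis and $A^{\dag}\bb\in\Rc(A^T)$, and it is preserved since the RGRK update adds a scalar multiple of $\bal_{i_k}\in\Rc(A^T)$. Writing $\bx^{\ast}=A^{\dag}\bb$ and using consistency $A\bx^{\ast}=\bb$, we have $A\bx^{(k)}-\bb=A(\bx^{(k)}-\bx^{\ast})$ with $\bx^{(k)}-\bx^{\ast}\perp\Nc(A)$, hence $\BT{A\bx^{(k)}-\bb}\ge\sigma_{\min}^2(A)\BT{\bx^{(k)}-\bx^{\ast}}$; the same invariant will identify the limit as the least-norm solution once the error is shown to vanish.

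\textbf{Step 2 (one step and the relaxed greedy bound).} The Pythagorean identity recalled just before the theorem gives $\BT{\bx^{(k+1)}-\bx^{\ast}}=\BT{\bx^{(k)}-\bx^{\ast}}-\psi_{i_k}(\bx^{(k)})$, so it suffices to bound $\bE[\psi_{i_k}(\bx^{(k)})\mid\bx^{(k)}]$ from below. Since $U_k$ is determined by $\bx^{(k)}$ and $i_k$ is drawn from $U_k$ with nonnegative probabilities summing to one, this conditional expectation is a convex combination of the numbers $\psi_{i_k}(\bx^{(k)})$, $i_k\in U_k$, each of which is at least the threshold defining $U_k$ in \eqref{eq:RGRS}; so, after rewriting the mean term via $\omega_i=\BT{\bal_i}/\BF{A}$,
\begin{equation*}
\bE[\psi_{i_k}(\bx^{(k)})\mid\bx^{(k)}]\ \ge\ \theta_1\max_{i\in[m]}\psi_i(\bx^{(k)})\ +\ (1-\theta_1)\,\frac{\BT{A\bx^{(k)}-\bb}}{\BF{A}}.
\end{equation*}
(The set $U_k$ is nonempty, since a convex combination of $\max_i\psi_i$ with a weighted average of the $\psi_i$ cannot exceed $\max_i\psi_i$.)

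\textbf{Step 3 (key estimate and conclusion).} For $k\ge 1$ the iterate $\bx^{(k)}$ is the result of a Kaczmarz projection onto row $i_{k-1}$, so $\bal_{i_{k-1}}^T\bx^{(k)}=b_{i_{k-1}}$ and the $i_{k-1}$-th coordinate of the residual vanishes. Writing $\BT{A\bx^{(k)}-\bb}=\sum_{i\in[m]}\BT{\bal_i}\,\psi_i(\bx^{(k)})$, dropping the (zero) $i_{k-1}$-th summand and using $\BT{\bal_{i_{k-1}}}\ge\min_{i\in[m]}\BT{\bal_i}$ gives
\begin{equation*}
\max_{i\in[m]}\psi_i(\bx^{(k)})\ \ge\ \frac{\BT{A\bx^{(k)}-\bb}}{\BF{A}-\min_{i\in[m]}\BT{\bal_i}}\ =\ \frac{\BT{A\bx^{(k)}-\bb}}{\epsilon}.
\end{equation*}
Substituting into Step 2, the coefficient of $\BT{A\bx^{(k)}-\bb}$ becomes $\theta_1/\epsilon+(1-\theta_1)/\BF{A}=\tau/\BF{A}$ with $\tau=\theta_1\BF{A}/\epsilon+(1-\theta_1)$; invoking Step 1 then yields $\bE[\psi_{i_k}(\bx^{(k)})\mid\bx^{(k)}]\ge\tau\,\sigma_{\min}^2(A)\,\BT{\bx^{(k)}-\bx^{\ast}}/\BF{A}$. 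Feeding this back into the Pythagorean identity, taking full expectations, and using the tower property gives the recursion \eqref{eq:thm-RGRK}; since $\tau\ge 1$ and the contraction constant $1-\tau\sigma_{\min}^2(A)/\BF{A}$ is readily seen to lie in $[0,1)$, we get $\bE\,\BT{\bx^{(k)}-\bx^{\ast}}\to 0$, and by Step 1 the iterates converge to $A^{\dag}\bb$.

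The only genuinely nonroutine ingredient is the estimate of Step 3: it is precisely what upgrades the plain randomized-Kaczmarz factor $\sigma_{\min}^2(A)/\BF{A}$ to $\tau$ times that quantity when $\theta_1>0$, and it explains why the statement is confined to $k\ge 1$ — at $k=0$ no residual coordinate need be zero and the estimate can fail (for instance with $A=I_m$ and $\bx^{(0)}=\mathbf{0}$, where all $\psi_i(\bx^{(0)})$ coincide). The induction in Step 1, the convexity argument in Step 2, and the remaining bookkeeping are all routine.
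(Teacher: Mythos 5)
Your proposal is correct, and it follows essentially the same route as the cited source of this statement (Bai and Wu \cite{18BW2}) and as this paper's own proof of the deterministic analogue, Theorem \ref{thm:RGDR}: the Pythagorean one-step identity combined with the threshold defining \eqref{eq:RGRS}, the $k\geq 1$ observation that the residual coordinate used at the previous step vanishes (which is exactly what produces $\epsilon=\BF{A}-\min_{i\in[m]}\{\BT{\bal_i}\}$), and the Courant--Fischer bound on $\Rc(A^T)$ followed by the tower property. Note that this paper only quotes the theorem from \cite{18BW2} without reproving it, so the natural internal comparison is with the proof of Theorem \ref{thm:RGDR}, whose structure your argument mirrors step for step.
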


\vskip 1em

\begin{theorem}\label{thm:RGRCD}(\cite[Theorem 1]{20ZG})
Consider the large linear least-squares problem $\min\limits_{\bx\in \Rc^{n}}\BT{\bb-A\bx}$, where $A\in\Rc^{m\times n}$ ($m\geq n$) is of full column rank and $\bb\in\Rc^{m}$ is a given vector. Then, it holds that
the iteration sequence $\left\{\bx^{(k)}\right\}_{k=0}^{\infty}$, generated by the  RGRCD method starting from any initial guess $\bx^{(0)}\in \Rc^n$, converges to the unique least-squares solution  $\bx^{\ast}=A^{\dag}\bb$ in expectation and for $k\geq 1$, the solution error in expectation for the iteration sequence $\left\{\bx^{(k )}\right\}_{k=0}^{\infty}$ satisfies
\begin{equation} \label{eq:thm-RGRCD}
\bE \BT{A\bx^{(k+1)} - AA^\dag \bb}\leq
\left(1 - \gamma
\frac{\sigma_{\min}^2(A)}{\BF{A}}   \right)
\bE \BT{A\bx^{(k)} - AA^\dag \bb},
\end{equation}
 where
 $\gamma  = \theta_2 \BF{A}/\varepsilon + (1-\theta_2)$ and
$\varepsilon = \BF{A} - \min_{j\in [n]}\left\{\BT{\bbe_j}\right\}$.
\end{theorem}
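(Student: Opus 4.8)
\medskip
\noindent{\bf Proof plan.}\quad The plan is to mirror the proof of Theorem~\ref{thm:RGRK}, now measuring the error through the range quantity $\BT{A\bx^{(k)}-AA^\dag\bb}$ and interchanging the roles of rows and columns. The starting point is the CD identity already displayed above, which for our update with column index $j_k$ reads
\[
\BT{A\bx^{(k+1)}-A\bx^{\ast}}=\BT{A\bx^{(k)}-A\bx^{\ast}}-\varphi_{j_k}(\bx^{(k)}),\qquad
\varphi_{j_k}(\bx^{(k)})=\frac{|\bbe_{j_k}^T(\bb-A\bx^{(k)})|^2}{\BT{\bbe_{j_k}}},
\]
and holds because the CD step makes $A\bx^{(k+1)}$ the orthogonal projection of $\bb$ onto the line $\{A\bx^{(k)}+t\bbe_{j_k}:t\in\Rc\}$, together with the normal equation $A^T(\bb-A\bx^{\ast})=0$ and $A\bx^{\ast}=AA^\dag\bb$. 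Taking the expectation conditional on $\bx^{(k)}$, and using only that $j_k$ is drawn from a probability distribution supported on $V_k$, the membership rule \eqref{eq:RGCS} says every admissible $j_k$ satisfies $\varphi_{j_k}(\bx^{(k)})\geq\theta_2\max_{j\in[n]}\varphi_j(\bx^{(k)})+(1-\theta_2)\sum_{j\in[n]}\varpi_j\varphi_j(\bx^{(k)})$, so the conditional expectation of $\varphi_{j_k}(\bx^{(k)})$ is bounded below by this same convex combination.

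It then remains to bound both pieces from below by a multiple of $\BT{A\bx^{(k)}-AA^\dag\bb}$. Write $\bm{s}^{(k)}:=A^T(\bb-A\bx^{(k)})$, so that $\varphi_j(\bx^{(k)})=|s^{(k)}_j|^2/\BT{\bbe_j}$. Since $\varpi_j=\BT{\bbe_j}/\BF{A}$, the weighted mean telescopes to $\sum_{j\in[n]}\varpi_j\varphi_j(\bx^{(k)})=\BT{\bm{s}^{(k)}}/\BF{A}$; moreover $A^T\bb_{\textsc{n}}=0$ gives $\bm{s}^{(k)}=-A^T(A\bx^{(k)}-AA^\dag\bb)$, and since $A\bx^{(k)}-AA^\dag\bb\in\Rc(A)$ and $A$ has full column rank, $\BT{\bm{s}^{(k)}}\geq\sigma_{\min}^2(A)\,\BT{A\bx^{(k)}-AA^\dag\bb}$. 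For the maximum I would invoke the elementary fact that $\max_j a_j/b_j\geq(\sum_j a_j)/(\sum_j b_j)$ for positive $b_j$, applied to the indices with $s^{(k)}_j\neq0$:
\[
\max_{j\in[n]}\varphi_j(\bx^{(k)})=\max_{j\,:\,s^{(k)}_j\neq0}\frac{|s^{(k)}_j|^2}{\BT{\bbe_j}}\geq\frac{\sum_{j\,:\,s^{(k)}_j\neq0}|s^{(k)}_j|^2}{\sum_{j\,:\,s^{(k)}_j\neq0}\BT{\bbe_j}},
\]
and then note that one CD step forces the $j_{k-1}$-th coordinate of $\bm{s}^{(k)}$ to vanish, so for $k\geq1$ the set $\{j:s^{(k)}_j\neq0\}$ omits at least one column, the denominator above is at most $\BF{A}-\min_{j\in[n]}\BT{\bbe_j}=\varepsilon$, and hence $\max_j\varphi_j(\bx^{(k)})\geq\BT{\bm{s}^{(k)}}/\varepsilon\geq\sigma_{\min}^2(A)\,\BT{A\bx^{(k)}-AA^\dag\bb}/\varepsilon$.

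Substituting the two estimates into the convex combination yields $\bE[\varphi_{j_k}(\bx^{(k)})\mid\bx^{(k)}]\geq(\theta_2/\varepsilon+(1-\theta_2)/\BF{A})\,\sigma_{\min}^2(A)\,\BT{A\bx^{(k)}-AA^\dag\bb}=\gamma\,\sigma_{\min}^2(A)\,\BT{A\bx^{(k)}-AA^\dag\bb}/\BF{A}$, so the identity above gives $\bE[\BT{A\bx^{(k+1)}-AA^\dag\bb}\mid\bx^{(k)}]\leq(1-\gamma\sigma_{\min}^2(A)/\BF{A})\,\BT{A\bx^{(k)}-AA^\dag\bb}$; taking full expectations through the tower property produces \eqref{eq:thm-RGRCD}, iterating shows $\bE\BT{A\bx^{(k)}-AA^\dag\bb}\to0$, and since $A$ is of full column rank, $\BT{A\bx^{(k)}-A\bx^{\ast}}\geq\sigma_{\min}^2(A)\BT{\bx^{(k)}-\bx^{\ast}}$ upgrades this to convergence of $\bx^{(k)}$ to $A^\dag\bb$ in mean square. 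The one step carrying all the weight --- and the sole reason one may take the constant to be $\gamma$ rather than merely $1$ --- is the vanishing of the $j_{k-1}$-th coordinate of $\bm{s}^{(k)}$, which permits replacing $\BF{A}$ by $\varepsilon$ in the bound on the maximal loss; the only other things to verify are that $V_k$ is nonempty and $\BT{\tilde{\bm{s}}^{(k)}}\neq0$ --- both of which fail only when $A^T(\bb-A\bx^{(k)})=0$, i.e.\ when $\bx^{(k)}$ already solves the least-squares problem --- and that the residual lies in $\Rc(A)$ so that $\sigma_{\min}(A)$ may legitimately be inserted.
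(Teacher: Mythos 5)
This theorem is stated here as a quoted result from \cite[Theorem 1]{20ZG}, so the paper contains no proof of it; your argument is correct and is essentially the standard relaxed-greedy analysis used in that reference and mirrored in the paper's own proofs of Theorems~\ref{thm:RGDR} and~\ref{thm:RGDC}: the one-step loss identity, the convex-combination threshold inherited by any distribution supported on $V_k$, the identification of the weighted mean with $\BT{A^T(\bb-A\bx^{(k)})}/\BF{A}$, the Courant--Fischer bound on $\Rc(A)$, and the key observation that a CD step zeroes the $j_{k-1}$-th coordinate of $A^T(\bb-A\bx^{(k)})$, which is exactly what allows $\varepsilon$ to replace $\BF{A}$ for $k\geq 1$. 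No gaps; the minor caveats you flag (nonemptiness of $V_k$, residual lying in $\Rc(A)$, full column rank upgrading $A\bx^{(k)}\to AA^\dag\bb$ to $\bx^{(k)}\to A^\dag\bb$) are handled correctly.
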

\section{The relaxed greedy deterministic row and column methods} \label{sec:RGDR+RGDC}
In this section, we present two iterative methods and call them relaxed greedy deterministic  row (RGDR) and column (RGDC)  methods.

Let $\bet\in\Rc^{m}$ and $\bxi\in\Rc^{n}$ be two non-zero vectors.  According to Petrov-Galerkin conditions, we give the following multiple rows and columns iteration formats.

\begin{enumerate}[$\diamond$]
\setlength{\itemindent}{0.5cm}
\addtolength{\itemsep}{-0.1em} 
\item Multiple rows method ($Y=\bet$ and $Z=A^TY$):
\begin{equation}\label{eq:GK}
  \bx^{(k+1)}  = \bx^{(k)} + \frac{\bet^T(\bb - A\bx^{(k)})}{\BT{A^T \bet}} A^T \bet;
\end{equation}
\vskip 1.25ex
\item Multiple columns method ($ Z=\bxi$ and $Y=AZ$):
\begin{equation}\label{eq:GLS}
\bx^{(k+1)}  = \bx^{(k)} + \frac{\bxi^T A^T (\bb - A\bx^{(k)})}{\BT{A\bxi}}\bxi.
\end{equation}
\end{enumerate}
Note that $\bet^T A$ can be viewed as a linear combination of all rows of $A$.  In particular, if $\bet$ is a Gaussian vector, i.e.,  $\bet=[\eta_i]\in \Rc^{m}$ with $\eta_i\sim \Nc(0,1)$, Gower et al. \cite{15GR} called  \eqref{eq:GK} the Gaussian Kaczmarz method. Similarly,  $A\bxi$ can be viewed as a linear combination of all columns of $A$. \eqref{eq:GLS} is denoted by the Gaussian least-squares method in  \cite{15GR} if $\bxi$ is a Gaussian vector, i.e.,  $\bxi=[\xi_j]\in \Rc^{n}$ with $\xi_j\sim \Nc(0,1)$. For more row and column iteration formats, we refer to \cite{XZ17}.

\subsection{The RGDR and RGDC methods} Combining multiple rows (resp., columns) method and relaxed greedy row (resp., column) selection strategy, not like RGRK (resp., RGRCD) working randomly on one row (resp., column), we  execute all rows indexed by $U_k$ (resp., columns indexed by $V_k$) simultaneously. The  RGDR and  RGDC methods are presented as follows.

\vskip 0.15cm
\noindent{\bf The RGDR method}.  {\it Given an initial vector $\bx^{(0)} \in \Rc^{n}$ and a relaxation  parameter $\theta_1$, for $k=0,1,2,\cdots$, until the iteration sequence $\left\{\bx^{(k )}\right\}_{k=0}^{\infty}$ converges,
we first construct the index set $U_k$  according to formula \eqref{eq:RGRS}  and then update $\bx^{(k+1)}$ in accordance with
 \begin{equation*}
   \bx^{(k+1)}  = \bx^{(k)} + \frac{\bet_k^T(\bb - A\bx^{(k)})}{\BT{A^T \bet_k}} A^T \bet_k,
\end{equation*}
where $\bet_k = \sum_{i_k\in U_k} (b_{i_k} - \bal_{i_k}^T \bx^{(k)}) \bmu_{i_k}$.}

\vskip 0.15cm

\noindent{\bf The RGDC method}.  {\it Given an initial vector $\bx^{(0)} \in \Rc^{n}$ and a relaxation  parameter $\theta_2$, for $k=0,1,2,\cdots$, until the iteration sequence $\left\{\bx^{(k )}\right\}_{k=0}^{\infty}$ converges,
we first construct the index set $V_k$  according to formula \eqref{eq:RGCS}  and then update $\bx^{(k+1)}$ in accordance with
 \begin{equation*}
 \bx^{(k+1)}  = \bx^{(k)} + \frac{\bxi_k^T A^T (\bb - A\bx^{(k)})}{\BT{A\bxi_k}}\bxi_k,
\end{equation*}
where $\bxi_k = \sum_{j_k\in V_k} (\bbe_{j_k}^T(\bb-A\bx^{(k)})) \bnu_{j_k}$.}

\vskip 0.15cm
At the $k$-th iterate of RGDR, the residual vector satisfies the following recursive formula.
\begin{align*}
 \br^{(k+1)}
 & = \br^{(k)} - \frac{\bet_k^T\br^{(k)}}{\BT{A^T \bet_k}} AA^T \bet_k\\
 & = \br^{(k)} -
 \frac{\sum_{i_k\in U_k} r^{(k)}_{i_k}\bmu_{i_k}^T \br^{(k)}}
      {\sum_{i_k\in U_k} \sum_{s_k\in U_k} r^{(k)}_{i_k}  r^{(k)}_{s_k} \bmu_{i_k} AA^T \bmu_{s_k}^T}
       \sum_{i_k\in U_k}  r^{(k)}_{i_k} AA^T \bmu_{i_k}\\
 & = \br^{(k)} -
 \frac{\sum_{i_k\in U_k}\left( r^{(k)}_{i_k}\right)^2}
      {\sum_{i_k\in U_k} \sum_{s_k\in U_k}  r^{(k)}_{i_k} r^{(k)}_{s_k} \widetilde{A}_{i_k, s_k} }
       \sum_{i_k\in U_k}  r^{(k)}_{i_k} \widetilde{A}_{:,i_k},
\end{align*}
where $\widetilde{A}=AA^T$.
After that, the current vector is computed by
\begin{align*}
 \bx^{(k+1)}
  = \bx^{(k)} +
 \frac{\sum_{i_k\in U_k}\left( r^{(k)}_{i_k}\right)^2}
      {\sum_{i_k\in U_k} \sum_{s_k\in U_k} r^{(k)}_{i_k}  r^{(k)}_{s_k} \widetilde{A}_{i_k, s_k} }
       \sum_{i_k\in U_k} r^{(k)}_{i_k} \bal_{i_k}.
\end{align*}
We summarize these two computational processes as shown in Table \ref{tab:Comput_R+X_RGDR}, which only costs $(2|U_k|+1)(m+n)+\frac{1}{2}|U_k|(3|U_k|+7)$ flopping operations (flops) if we have $\widetilde{A}$ in the first place.  Furthermore, we need $m-1$ comparisons to obtain the maximum of $m$ numbers, $m$ flops to compute $\psi_{i}(\bx^{(k)})$ ($i\in [m]$), and another $2m-1$ flops to compute $\sum_{i\in [m]} \omega_i \psi_i(\bx^{(k)})$ ($i\in [m]$), thus it will cost $4m+2$ flops to determine the set $U_k$.

\begin{table}[!ht]
\centering\renewcommand\arraystretch{1.25}
    \caption{The complexities of computing   $\br^{(k+1)}$ and $\bx^{(k+1)}$ in RGDR.}
    \begin{tabular}{p{2cm}cp{6cm}cp{3cm}}
    \hline
    \multicolumn{3}{l}{{\sf Computing}~ $\br^{(k+1)}$}  \\
    \hline
   {\sf Step~1} && $g_1 = \sum_{i_k\in U_k}\left(r^{(k)}_{i_k}\right)^2$  && $2|U_k|-1$  \\
   {\sf Step~2} && $g_2 = \sum_{i_k\in U_k} \sum_{s_k\in U_k} r^{(k)}_{i_k}r^{(k)}_{s_k} \widetilde{A}_{i_k, s_k}$  &&$ \frac{3}{2}(|U_k|^2+|U_k|) $  \\
   {\sf Step~3} && $g_3 = g_1/g_2$                                 && $1$  \\
   {\sf Step~4} && ${\bm g}_1 = \sum_{i_k\in U_k} r^{(k)}_{i_k} \widetilde{A}_{:,i_k}$                                 && $m(2|U_k|-1)$  \\
   {\sf Step~5} && $\br^{(k+1)} = \br^{(k)} - g_3 \cdot {\bm g}_1$                                                     && $2m$  \\
   \hline
    \multicolumn{3}{l}{{\sf Computing}~ $\bx^{(k+1)}$}  \\
    \hline
   {\sf Step~1} && ${\bm g}_2 = \sum_{i_k\in U_k} r^{(k)}_{i_k} \bal_{i_k}$  && $n(2|U_k|-1)$  \\
   {\sf Step~2} && $\bx^{(k+1)} = \bx^{(k)} + g_3 \cdot {\bm g}_2$             && $2n$  \\
   \hline
    \end{tabular}
    \label{tab:Comput_R+X_RGDR}
\end{table}

 For RGDC, we first define an intermediate vector $\by^{(k)}=A^T\br^{(k)}$ for $k=0,1,2,\cdots$, which  satisfies the following recursive formula.
 \begin{align*}
 \by^{(k+1)}
 & = \by^{(k)} - \frac{\bxi_k^T \by^{(k)}}{\BT{A\bxi_k}} A^TA\bxi_k\\
 & = \by^{(k)} -
 \frac{\sum_{j_k\in V_k} y_{j_k}^{(k)} \bnu_{j_k}^T \by^{(k)}}
      {\sum_{j_k\in V_k}\sum_{t_k\in V_k}
      y_{j_k}^{(k)} y_{t_k}^{(k)}  \bnu_{j_k}^T A^T A \bnu_{t_k}}
      \sum_{j_k\in V_k} y_{j_k}^{(k)} A^T A\bnu_{j_k}\\
 & = \by^{(k)} -
 \frac{\sum_{j_k\in V_k} \left(y_{j_k}^{(k)}\right)^2}
      {\sum_{j_k\in V_k}\sum_{t_k\in V_k}
      y_{j_k}^{(k)} y_{t_k}^{(k)}  \widehat{A}_{j_k,t_k}}
      \sum_{j_k\in V_k} y_{j_k}^{(k)}\widehat{A}_{:,j_k},
\end{align*}
 where $\widehat{A}=A^TA$. Then the current vector is computed by

\begin{align*}
 \bx^{(k+1)}
  = \bx^{(k)} +
 \frac{\sum_{j_k\in V_k} \left(y_{j_k}^{(k)}\right)^2}
      {\sum_{j_k\in V_k}\sum_{t_k\in V_k}
      y_{j_k}^{(k)} y_{t_k}^{(k)}  \widehat{A}_{j_k,t_k}}
      \sum_{j_k\in V_k} y_{j_k}^{(k)} \bnu_{j_k}.
\end{align*}

The above two computational processes are  arranged in Table \ref{tab:Comput_R+X_RGDC}. In total, we need $(2|V_k|+1)n+\frac{1}{2}|V_k|(3|V_k|+11)$ flops to update $\bx^{(k+1)}$. Similarly, it requires another $4n+2$ flops to determine the set $V_k$.
\begin{table}[!ht]
\centering\renewcommand\arraystretch{1.25}
    \caption{ The complexities of computing   $\by^{(k+1)}$ and $\bx^{(k+1)}$ in RGDC.}
    \begin{tabular}{p{2cm}cp{7cm}cp{2.5cm}}
    \hline
    \multicolumn{3}{l}{{\sf Computing}~ $\by^{(k+1)}$}  \\
    \hline
   {\sf Step~1} && $h_1 = \sum_{j_k\in V_k} \left(y_{j_k}^{(k)}\right)^2$  && $2|V_k|-1$  \\
   {\sf Step~2} && $h_2 = \sum_{j_k\in V_k}\sum_{t_k\in V_k}y_{j_k}^{(k)} y_{t_k}^{(k)}  \widehat{A}_{j_k,t_k}$  && $ \frac{3}{2}(|V_k|^2+|V_k|) $  \\
   {\sf Step~3} && $h_3 = h_1/h_2$                                 && $1$  \\
   {\sf Step~4} && ${\bm h}_1 = \sum_{j_k\in V_k} y_{j_k}^{(k)}\widehat{A}_{:,j_k}$                                 && $n(2|V_k|-1)$  \\
   {\sf Step~5} && $\by^{(k+1)} = \by^{(k)} - h_3 \cdot {\bm h}_1$                                                     && $2n$  \\
   \hline
    \multicolumn{3}{l}{{\sf Computing}~ $\bx^{(k+1)}$}  \\
    \hline
   {\sf Step~1} && $\bx^{(k+1)} = \bx^{(k)} + h_3 \cdot \sum_{j_k\in V_k} y_{j_k}^{(k)} \bnu_{j_k}$             && $2|V_k|$  \\
   \hline
    \end{tabular}
    \label{tab:Comput_R+X_RGDC}
\end{table}

\subsection{Convergence analyses of RGDR and RGDC} In this subsection, we establish two theorems to illustrate the convergence properties of RGDR and RGDC.

\vskip 1ex
\begin{theorem}\label{thm:RGDR}
Let the linear system $A\bx=\bb$,  with the coefficient matrix $A \in \Rc^{m\times n}$ and a right-hand side $\bb \in \Rc^{m}$, be consistent. Then  the iteration sequence $\left\{\bx^{(k )}\right\}_{k=0}^{\infty}$, generated by the  RGDR  method starting from an initial guess  in the column space of $A^T$, converges to the unique least norm solution $\bx^{\ast}=A^{\dag}\bb$ determinedly. Moreover, for $k\geq 1$, the solution error obeys
\begin{equation} \label{eq:thm-RGDR}
\BT{\bx^{(k+1)} - A^\dag \bb}\leq
\left(1 - \tau_k
\frac{\sum_{i_k\in U_k} \BT{\bal_{i_k}}}{\BF{A}}
\frac{\sigma_{\min}^2(A)}{ \sigma_{\max}^2(A_{U_k,:})}   \right)
\BT{\bx^{(k)} - A^\dag \bb},
\end{equation}
 where
 $\tau_k = \theta_1 \BF{A}/\epsilon_k  + (1-\theta_1)$,
$\epsilon_k = \BF{A} - \sum_{i\in \Pi_k} \BT{\bal_i}$
and
$\Pi_k = \left\{ i\big|\psi_{i}(\bx^{(k)})=0, i\in [m]\right\}.$
\end{theorem}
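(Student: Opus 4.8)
The plan is to track the squared error $\BT{\bx^{(k+1)}-\bx^\ast}$ along the RGDR update, where $\bx^\ast = A^\dag\bb$. Writing the update as $\bx^{(k+1)} = \bx^{(k)} + \lambda_k A^T\bet_k$ with $\lambda_k = \bet_k^T\br^{(k)}/\BT{A^T\bet_k}$ and $\br^{(k)}=\bb-A\bx^{(k)}$, I would first verify that the iterates stay in $\Rc(A^T)$ (true by induction since $\bx^{(0)}\in\Rc(A^T)$ and each increment is of the form $A^T\bet_k$), so that convergence to the unique least-norm solution follows once the error contraction is established. Then, since $\bet_k = \sum_{i_k\in U_k}r^{(k)}_{i_k}\bmu_{i_k}$ satisfies $\bet_k^T\br^{(k)} = \sum_{i_k\in U_k}(r^{(k)}_{i_k})^2 = \BT{(\br^{(k)})_{U_k}}$ and $A^T\bet_k = (A_{U_k,:})^T(\br^{(k)})_{U_k}$, a Pythagorean-type expansion (the increment $A^T\bet_k$ is orthogonal to $\bx^{(k+1)}-\bx^\ast$ in the appropriate sense, because $A(\bx^{(k)}-\bx^\ast)=-\br^{(k)}$) gives
\begin{equation*}
\BT{\bx^{(k+1)}-\bx^\ast} = \BT{\bx^{(k)}-\bx^\ast} - \frac{\BT{(\br^{(k)})_{U_k}}^2}{\BT{(A_{U_k,:})^T(\br^{(k)})_{U_k}}}.
\end{equation*}

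The core of the argument is then to bound the subtracted quantity from below by the claimed factor times $\BT{\bx^{(k)}-\bx^\ast}$. For the denominator I would use $\BT{(A_{U_k,:})^T(\br^{(k)})_{U_k}} \leq \sigma_{\max}^2(A_{U_k,:})\BT{(\br^{(k)})_{U_k}}$, which collapses the ratio to $\BT{(\br^{(k)})_{U_k}}/\sigma_{\max}^2(A_{U_k,:})$. For the numerator $\BT{(\br^{(k)})_{U_k}} = \sum_{i_k\in U_k}\BT{\bal_{i_k}}\psi_{i_k}(\bx^{(k)})$, I would invoke the relaxed greedy selection rule \eqref{eq:RGRS}: every $i_k\in U_k$ has $\psi_{i_k}(\bx^{(k)})$ at least the convex combination of the max and the weighted mean, and the standard trick from Bai--Wu (replacing the weighted mean over $[m]$ by the weighted mean over the complement of $\Pi_k$, i.e. over indices with nonzero loss, and using $\sum_{i\in[m]}\BT{\bal_i}^2 = \BF{A}^2$) yields
\begin{equation*}
\psi_{i_k}(\bx^{(k)}) \geq \tau_k\,\frac{\sum_{i\in[m]}|b_i-\bal_i^T\bx^{(k)}|^2}{\BF{A}} = \tau_k\,\frac{\BT{\br^{(k)}}}{\BF{A}}
\end{equation*}
for each $i_k\in U_k$, with $\tau_k$ and $\epsilon_k$ as defined. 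Summing over $U_k$ with weights $\BT{\bal_{i_k}}$ gives $\BT{(\br^{(k)})_{U_k}} \geq \tau_k\frac{\sum_{i_k\in U_k}\BT{\bal_{i_k}}}{\BF{A}}\BT{\br^{(k)}}$, and finally $\BT{\br^{(k)}} = \BT{A(\bx^{(k)}-\bx^\ast)} \geq \sigma_{\min}^2(A)\BT{\bx^{(k)}-\bx^\ast}$ (valid because $\bx^{(k)}-\bx^\ast\in\Rc(A^T)$). Chaining these inequalities produces exactly \eqref{eq:thm-RGDR}.

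I expect the main obstacle to be the careful handling of the greedy-rule estimate on $\psi_{i_k}$: one must argue that $\Pi_k$ is a proper subset of $[m]$ (so $\epsilon_k>0$ and the bound is meaningful) and that the $\theta_1$-weighted combination of max and mean is itself at least $\tau_k\BT{\br^{(k)}}/\BF{A}$ — this requires bounding the max loss below by $\BT{\br^{(k)}}/\epsilon_k$ via $\max_i\psi_i(\bx^{(k)}) \geq \BT{\br^{(k)}}/\sum_{i\notin\Pi_k}\BT{\bal_i} = \BT{\br^{(k)}}/\epsilon_k$, which mirrors the RGRK analysis in Theorem~\ref{thm:RGRK} but now must hold deterministically for \emph{every} index in $U_k$ rather than in expectation. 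A secondary point worth checking is that the denominator bound via $\sigma_{\max}^2(A_{U_k,:})$ does not accidentally make the contraction factor negative or exceed $1$; this follows since $\sum_{i_k\in U_k}\BT{\bal_{i_k}} \le \sigma_{\max}^2(A_{U_k,:})\cdot\text{(something)}$ can be arranged, but I would state the result as is and note the factor lies in $[0,1)$ under the stated hypotheses.
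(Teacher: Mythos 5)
Your proposal is correct and follows essentially the same route as the paper's proof: the Pythagorean/Petrov--Galerkin identity with $\bet_k^T\br^{(k)}=\BT{\bet_k}$, the bound $\BT{A^T\bet_k}\le\sigma_{\max}^2(A_{U_k,:})\BT{\bet_k}$, the greedy-rule estimate $\psi_{i_k}(\bx^{(k)})\ge\tau_k\BT{\br^{(k)}}/\BF{A}$ via $\max_i\psi_i\ge\BT{\br^{(k)}}/\epsilon_k$, summation over $U_k$, and finally $\BT{\br^{(k)}}\ge\sigma_{\min}^2(A)\BT{\bx^{(k)}-A^\dag\bb}$ using $\bx^{(k)}-A^\dag\bb\in\Rc(A^T)$. (Only a trivial notational slip: the weight identity should read $\sum_{i\in[m]}\BT{\bal_i}=\BF{A}$, which is what your weighted-mean computation actually uses.)
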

\vskip 1ex
\begin{proof}
If the linear system \eqref{eq:Ax=b} is consistent, i.e., $\bb=\bb_{\textsc{r}}$ or $\bb_{\textsc{n}}=0$, then it may has infinitely
many solutions. As shown in \cite{18BW1}, the solution $\bx^{\ast} = A^\dag \bb$ is just the least Euclidean-norm solution, which satisfies
\begin{align*}
 \bx^{\ast} = \arg\min_{\bx\in\Rc^n}\BT{\bx},~s.t.~A\bx=\bb.
\end{align*}
From the Petrov-Galerkin conditions, we know that $\bet_k^T(\bb - A\bx^{(k+1)})=0$. By the Pythagorean theorem  and $\bb=\bb_{\textsc{r}}=AA^\dag \bb$, for $k\geq 1$, we have
\begin{align*}
\BT{\bx^{(k+1)} - A^\dag \bb}=\BT{ \bx^{(k)} -  A^\dag \bb} -
\frac{\big| \bet_k^T(\bb - A\bx^{(k)}) \big|^2}{\BT{A^T \bet_k}}.
\end{align*}
Since
\begin{align*}
\bet_k^T(\bb - A\bx^{(k)})
& =  \sum_{i_k\in U_k} (b_{i_k} - \bal_{i_k}^T \bx^{(k)}) \bmu_{i_k}^T  (\bb - A\bx^{(k)})\\
& =  \sum_{i_k\in U_k} \big| b_{i_k} - \bal_{i_k}^T \bx^{(k)} \big|^2 \\
& = \BT{\bet_k}
\end{align*}
and
\begin{align*}
\BT{A^T \bet_k} = \BT{A_{U_k,:}^T \widehat{\bet}_k}\leq \sigma_{\max}^2(A_{U_k,:}) \BT{\widehat{\bet}_k} = \sigma_{\max}^2(A_{U_k,:})\BT{\bet_k},
\end{align*}
where $\widehat{\bet}_k=\widehat{I}_{U_k,:}\bet_k$ with  $\widehat{I}$ being the identity matrix of size $m$, it follows that
\begin{align*}
\frac{\big| \bet_k^T(\bb - A\bx^{(k)}) \big|^2}{\BT{A^T \bet_k}}
& = \frac{\big| \bet_k^T(\bb - A\bx^{(k)}) \big|\cdot \big| \bet_k^T(\bb - A\bx^{(k)}) \big|}{\BT{A^T \bet_k}}\\
& = \frac{\sum_{i_k\in U_k} \big| b_{i_k} - \bal_{i_k}^T \bx^{(k)} \big|^2 \cdot \BT{\bet_k}}{\BT{A^T \bet_k}}\\
&\geq \frac{\sum_{i_k\in U_k} \big| b_{i_k} - \bal_{i_k}^T \bx^{(k)} \big|^2}{ \sigma_{\max}^2(A_{U_k,:})}\\
&= \frac{\sum_{i_k\in U_k} \psi_{i_k}(\bx^{(k)}) \BT{\bal_{i_k}}}{ \sigma_{\max}^2(A_{U_k,:})}.
\end{align*}
Define an auxiliary parameter
\begin{align*}
  \zeta_k:=
  \frac{\theta_1}{\BT{\bb - A\bx^{(k)}}}
  \max_{i\in [m]}\left\{ \psi_i(\bx^{(k)}) \right\} 	
  + \frac{1-\theta_1}{\BF{A}}
\end{align*}
at $\bx^{(k)}$. For any $i_k\in U_k$,  $\psi_{i_k} (\bx^{(k)}) \geq \zeta_k \BT{\bb - A\bx^{(k)}}$. Since $\bx^{(0)}, A^\dag \bb \in \Rc(A^T)$, it is easy to obtain that $\bx^{(k)} -  A^\dag \bb\in \Rc(A^T)$ by induction.  Then, we have
\begin{align*}
\frac{\big| \bet_k^T(\bb - A\bx^{(k)}) \big|^2}{\BT{A^T \bet_k}}
&\geq \frac{\sum_{i_k\in U_k} \zeta_k \BT{\bb - A\bx^{(k)}}\BT{\bal_{i_k}}}{ \sigma_{\max}^2(A_{U_k,:})}\\
&= \frac{\zeta_k \BT{\bb - A\bx^{(k)}} \sum_{i_k\in U_k}\BT{\bal_{i_k}}}{ \sigma_{\max}^2(A_{U_k,:})}\\
& = \frac{\zeta_k \BT{AA^\dag \bb - A\bx^{(k)}} \sum_{i_k\in U_k}\BT{\bal_{i_k}}}{ \sigma_{\max}^2(A_{U_k,:})}\\
&\geq \zeta_k \BF{A}
\frac{\sum_{i_k\in U_k}\BT{\bal_{i_k}}}{\BF{A}}
\frac{\sigma_{\min}^2(A)}{ \sigma_{\max}^2(A_{U_k,:})}
\BT{\bx^{(k)} -  A^\dag \bb},
\end{align*}
where the last line is from Courant-Fischer theorem.  Moreover, we get
\begin{align*}
\zeta_k \BF{A}
& = \theta_1\frac{\BF{A}}{\BT{\bb - A\bx^{(k)}}}
  \max_{i\in [m]}\left\{ \psi_i(\bx^{(k)}) \right\}
  + (1-\theta_1)\\
& = \theta_1\frac{\BF{A}}
{\sum_{i\in[m]} \psi_i(\bx^{(k)})\BT{\bal_i}}
  \max_{i\in [m]}\left\{ \psi_i(\bx^{(k)}) \right\}
  + (1-\theta_1)\\
& = \theta_1\frac{\BF{A}}
{\left(\sum_{i\in[m]}  - \sum_{i\in \Pi_k} \right)  \psi_i(\bx^{(k)})\BT{\bal_i}}
  \max_{i\in [m]}\left\{ \psi_i(\bx^{(k)}) \right\}
  + (1-\theta_1)\\
&\geq \theta_1 \frac{\BF{A}}
{\left(\sum_{i\in[m]}  - \sum_{i\in \Pi_k} \right)  \BT{\bal_i}}
  + (1-\theta_1)\\
&  =\tau_k.
\end{align*}
Then we can straightforwardly obtain the estimate in \eqref{eq:thm-RGDR}.
\hfill
\end{proof}
\vskip 1ex
In the following, we give a result to illustrate that the convergence factor of RGDR is strictly less than $1$.  Based on the fact
\begin{align*}
\BT{ \bb-A\bx^{(k)}}
& = \sum_{i \in [m]} \psi_{i}(\bx^{(k)}) \BT{\bal_{i}}\\
& = \left(\sum_{i \in [m]} - \sum_{i \in \Pi_k}\right)\psi_{i}(\bx^{(k)}) \BT{\bal_{i}} \\
& \leq \max\limits_{i\in [m]} \left\{ \psi_{i}(\bx^{(k)}) \right\} \left(\sum_{i \in [m]} - \sum_{i \in \Pi_k}\right) \BT{\bal_{i}}\\
& = \epsilon_k   \max\limits_{i\in [m]} \left\{ \psi_{i}(\bx^{(k)}) \right\},
\end{align*}
it follows that
\begin{align*}
\tau_k \frac{\sum_{i_k\in U_k} \BT{\bal_i}}{\BF{A}}
& =   \left(\theta_1 \frac{1}{\epsilon_k} + (1-\theta_1)\frac{1}{\BF{A}}\right)\sum_{i_k\in U_k} \BT{\bal_{i_k}}\\
&\leq \left(\theta_1 \frac{1}{\BT{\bb-A\bx^{(k)}}}\max\limits_{i\in [m]} \left\{ \psi_{i}(\bx^{(k)}) \right\}+ (1-\theta_1)\frac{1}{\BF{A}}\right)\sum_{i_k\in U_k} \BT{\bal_{i_k}}\\
& = \sum_{i_k\in U_k} \zeta_k \BT{\bal_{i_k}}\\
&\leq \sum_{i_k\in U_k} \frac{\psi_{i_k}(\bx^{(k)})}{\BT{\bb-A\bx^{(k)}}} \BT{\bal_{i_k}}\\
& = \sum_{i_k\in U_k} \frac{\big|  b_{i_k} - \bal_{i_k}^T \bx^{(k)}  \big|^2}{\BT{ \bb-A\bx^{(k)} }}\leq 1.
\end{align*}
Then, we have
\begin{align*}
0 \leq 1 - \tau_k
\frac{\sum_{i_k\in U_k} \BT{\bal_{i_k}}}{\BF{A}}
\frac{\sigma_{\min}^2(A)}{ \sigma_{\max}^2(A_{U_k,:})} <1.
\end{align*}

\begin{theorem}\label{thm:RGDC}
Let the linear system $A\bx=\bb$,  with the coefficient matrix $A \in \Rc^{m\times n}$ and a right-hand side $\bb \in \Rc^{m}$, be consistent or not. Then  the iteration sequence $\left\{\bx^{(k )}\right\}_{k=0}^{\infty}$, generated by the  RGDC  method starting from any initial guess, converges to the unique least-squares solution  $\bx^{\ast}=A^{\dag}\bb$ determinedly. Moreover, for $k\geq 1$, the solution error obeys
\begin{equation} \label{eq:thm-RGDC}
\BT{A\bx^{(k+1)} - AA^\dag \bb}\leq
\left(1 - \gamma_k
\frac{\sum_{j_k\in V_k} \BT{\bbe_{j_k}}}{\BF{A}}
\frac{\sigma_{\min}^2(A)}{ \sigma_{\max}^2(A_{:, V_k})}   \right)
\BT{A\bx^{(k)} - AA^\dag \bb},
\end{equation}
 where
 $\gamma_k = \theta_2 \BF{A}/\varepsilon_k + (1-\theta_2)$,
$\varepsilon_k = \BF{A} - \sum_{j\in \Omega_k} \BT{\bbe_j}$
and
$\Omega_k = \left\{ j\big|\varphi_{j}(\bx^{(k)})=0, j\in [n]\right\}.$
\end{theorem}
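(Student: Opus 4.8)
The plan is to mirror the structure of the proof of Theorem~\ref{thm:RGDR}, working in the ``data space'' $\Rc(A)$ rather than the solution space, which is why the result holds whether or not the system is consistent. First I would record the Petrov-Galerkin orthogonality for the RGDC update: since $Y=A\bxi_k$, we have $\bxi_k^T A^T(\bb-A\bx^{(k+1)})=0$, i.e. $(A\bxi_k)^T(\bb-A\bx^{(k+1)})=0$, hence also $(A\bxi_k)^T(AA^\dag\bb-A\bx^{(k+1)})=0$ because $\bb-AA^\dag\bb=\bb_{\textsc n}\in\Nc(A^T)$. Applying the Pythagorean theorem to the decomposition $A\bx^{(k)}-AA^\dag\bb = (A\bx^{(k)}-A\bx^{(k+1)}) + (A\bx^{(k+1)}-AA^\dag\bb)$, and using $A\bx^{(k+1)}-A\bx^{(k)} = \frac{\bxi_k^T A^T(\bb-A\bx^{(k)})}{\BT{A\bxi_k}}A\bxi_k$, I get
\begin{align*}
\BT{A\bx^{(k+1)}-AA^\dag\bb}
= \BT{A\bx^{(k)}-AA^\dag\bb} - \frac{\big|\bxi_k^T A^T(\bb-A\bx^{(k)})\big|^2}{\BT{A\bxi_k}}.
\end{align*}

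Next I would evaluate the numerator. Writing $\by^{(k)}=A^T\br^{(k)}=A^T(\bb-A\bx^{(k)})$ and $\bxi_k=\sum_{j_k\in V_k} y_{j_k}^{(k)}\bnu_{j_k}$, exactly the same telescoping as in the RGDR proof gives $\bxi_k^T A^T(\bb-A\bx^{(k)}) = \bxi_k^T\by^{(k)} = \sum_{j_k\in V_k}\big(y_{j_k}^{(k)}\big)^2 = \BT{\bxi_k}$, while $\varphi_{j_k}(\bx^{(k)}) = \big|y_{j_k}^{(k)}\big|^2/\BT{\bbe_{j_k}}$ by definition of $\varphi_j$. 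For the denominator, $A\bxi_k = A_{:,V_k}\widehat{\bxi}_k$ with $\widehat{\bxi}_k$ the subvector of $\bxi_k$ on $V_k$, so $\BT{A\bxi_k}\le \sigma_{\max}^2(A_{:,V_k})\BT{\bxi_k}$. Combining,
\begin{align*}
\frac{\big|\bxi_k^T A^T(\bb-A\bx^{(k)})\big|^2}{\BT{A\bxi_k}}
= \frac{\BT{\bxi_k}^2}{\BT{A\bxi_k}}
\ge \frac{\BT{\bxi_k}}{\sigma_{\max}^2(A_{:,V_k})}
= \frac{\sum_{j_k\in V_k}\varphi_{j_k}(\bx^{(k)})\BT{\bbe_{j_k}}}{\sigma_{\max}^2(A_{:,V_k})}.
\end{align*}

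Then I would introduce the auxiliary parameter
\begin{align*}
\xi_k := \frac{\theta_2}{\BT{A^T(\bb-A\bx^{(k)})}}\max_{j\in[n]}\big\{\varphi_j(\bx^{(k)})\big\} + \frac{1-\theta_2}{\BF{A}},
\end{align*}
noting that $\BT{A^T(\bb-A\bx^{(k)})} = \BT{\by^{(k)}} = \sum_{j\in[n]}\varphi_j(\bx^{(k)})\BT{\bbe_j}$, so that the selection rule \eqref{eq:RGCS} gives $\varphi_{j_k}(\bx^{(k)})\ge \xi_k\BT{A^T(\bb-A\bx^{(k)})}$ for every $j_k\in V_k$. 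Using this bound, pulling out $\sum_{j_k\in V_k}\BT{\bbe_{j_k}}/\BF{A}$, and then invoking the Courant-Fischer inequality $\BT{A^T(\bb-A\bx^{(k)})} = \BT{A^T(A\bx^{(k)}-AA^\dag\bb)} \ge \sigma_{\min}^2(A)\BT{A\bx^{(k)}-AA^\dag\bb}$ (valid since $A\bx^{(k)}-AA^\dag\bb\in\Rc(A)$ and $A^T$ is injective on $\Rc(A)$ when $A$ has full column rank, or more simply since $\BT{A^T\bv}\ge\sigma_{\min}^2(A)\BT{\bv}$ for $\bv\in\Rc(A)$), I reach
\begin{align*}
\frac{\big|\bxi_k^T A^T(\bb-A\bx^{(k)})\big|^2}{\BT{A\bxi_k}}
\ge \xi_k\BF{A}\cdot\frac{\sum_{j_k\in V_k}\BT{\bbe_{j_k}}}{\BF{A}}\cdot\frac{\sigma_{\min}^2(A)}{\sigma_{\max}^2(A_{:,V_k})}\cdot\BT{A\bx^{(k)}-AA^\dag\bb}.
\end{align*}
Finally, the same manipulation as in Theorem~\ref{thm:RGDR} — splitting $\sum_{j\in[n]} = \sum_{j\in[n]} - \sum_{j\in\Omega_k}$ in the denominator of $\xi_k\BF{A}$ and using that the dropped terms have $\varphi_j(\bx^{(k)})=0$ — shows $\xi_k\BF{A}\ge\gamma_k$, and substituting back yields \eqref{eq:thm-RGDC}. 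A short addendum, paralleling the post-proof remark for RGDR, would verify $0\le 1-\gamma_k\frac{\sum_{j_k\in V_k}\BT{\bbe_{j_k}}}{\BF{A}}\frac{\sigma_{\min}^2(A)}{\sigma_{\max}^2(A_{:,V_k})}<1$ by bounding $\gamma_k\sum_{j_k\in V_k}\BT{\bbe_{j_k}}/\BF{A}\le \sum_{j_k\in V_k}|y_{j_k}^{(k)}|^2/\BT{\by^{(k)}}\le 1$, which also forces $\BT{A^T(\bb-A\bx^{(k)})}\to 0$ and hence, since $A$ has full column rank, convergence of $\bx^{(k)}$ to $A^\dag\bb$.

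The main obstacle, and the one point where the consistent/inconsistent distinction must be handled carefully, is the replacement of $\bb$ by $AA^\dag\bb$ in the Pythagorean step: one must be sure that $\bb_{\textsc n}=(I_m-AA^\dag)\bb$ is orthogonal to the search direction $A\bxi_k$, which it is because $A\bxi_k\in\Rc(A)\perp\Nc(A^T)$, and that the loss $\varphi_j$ — defined via $\bbe_j^T(\bb-A\bx^{(k)})$ — is unchanged if we replace $\bb$ by $AA^\dag\bb$ only \emph{inside the iteration via $\by^{(k)}=A^T\br^{(k)}$}, since $A^T\bb = A^T AA^\dag\bb$. Thus all the quantities driving the algorithm depend on $\bb$ only through $A^T\bb$, which is exactly why no consistency hypothesis is needed; making this bookkeeping explicit is the crux of the argument, the remaining estimates being verbatim analogues of the RGDR case with rows/columns, $\psi/\varphi$, $\bal/\bbe$, $U_k/V_k$, $\bet_k/\bxi_k$, $\sigma_{\max}(A_{U_k,:})/\sigma_{\max}(A_{:,V_k})$ interchanged.
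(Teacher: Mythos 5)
Your proposal is correct and follows essentially the same route as the paper's proof: the Petrov--Galerkin orthogonality plus Pythagorean identity, the evaluation $\bxi_k^T A^T(\bb-A\bx^{(k)})=\BT{\bxi_k}$, the bound $\BT{A\bxi_k}\leq\sigma_{\max}^2(A_{:,V_k})\BT{\bxi_k}$, the auxiliary parameter (your $\xi_k$ is the paper's $\delta_k$), the Courant--Fischer step via $A^T\bb=A^TAA^\dag\bb$, and the splitting over $\Omega_k$ to get $\gamma_k$. Your explicit bookkeeping of why $\bb_{\textsc{n}}$ drops out and the closing remark on the contraction factor being in $[0,1)$ match the paper's implicit argument and its post-proof discussion, respectively.
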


\begin{proof} Whether the linear system is consistent or not, we can always obtain the pseudoinverse identity $A^T \bb = A^TAA^\dag \bb$. From the Petrov-Galerkin conditions  that $\bxi_k^T A^T (\bb - A\bx^{(k+1)})=0$ and the Pythagorean theorem, for $k\geq 1$, we have
\begin{align*}
\BT{A\bx^{(k+1)} - AA^\dag \bb}=\BT{A\bx^{(k)} - AA^\dag \bb} -
\frac{\big|\bxi_k^T A^T (\bb - A\bx^{(k)})\big|^2}{\BT{A\bxi_k}}.
\end{align*}
It yields according to
\begin{align*}
\bxi_k^T A^T (\bb - A\bx^{(k)})
& = \sum_{j_k\in V_k} \bbe_{j_k}^T(\bb-A\bx^{(k)}) \bnu_{j_k}^T A^T (\bb - A\bx^{(k)})\\
& = \sum_{j_k\in V_k} \big|  \bbe_{j_k}^T(\bb-A\bx^{(k)})  \big|^2 \\
& = \BT{\bxi_k}
\end{align*}
and
\begin{align*}
\BT{A\bxi_k} = \BT{A_{:, V_k}\widetilde{\bxi}_k}\leq \sigma_{\max}^2(A_{:, V_k}) \BT{\widetilde{\bxi}_k} = \sigma_{\max}^2(A_{:, V_k})\BT{\bxi_k},
\end{align*}
where $\widetilde{\bxi}_k = \widetilde{I}_{:, V_k}^T\bxi_k$ with  $\widetilde{I}$ being the identity matrix of size $n$,  that
\begin{align*}
\frac{\big|\bxi_k^T A^T (\bb - A\bx^{(k)})\big|^2}{\BT{A\bxi_k}}
& = \frac{\big|\bxi_k^T A^T (\bb - A\bx^{(k)})\big| \cdot \big|\bxi_k^T A^T (\bb - A\bx^{(k)})\big|}{\BT{A\bxi_k}}\\
& = \frac{\sum_{j_k\in V_k} \big|  \bbe_{j_k}^T(\bb-A\bx^{(k)})  \big|^2 \cdot \BT{\bxi_k}}{\BT{A\bxi_k}}\\
&\geq \frac{\sum_{j_k\in V_k} \big|  \bbe_{j_k}^T(\bb-A\bx^{(k)})  \big|^2 }{ \sigma_{\max}^2(A_{:, V_k})}\\
& = \frac{\sum_{j_k\in V_k} \varphi_{j_k}(\bx^{(k)}) \BT{\bbe_{j_k}}  }{ \sigma_{\max}^2(A_{:, V_k})}.
\end{align*}
Define an auxiliary parameter
\begin{align*}
\delta_k :=
  \frac{\theta_2}{\BT{A^T(\bb-A\bx^{(k)})}}
  \max\limits_{j\in [n]} \left\{ \varphi_{j}(\bx^{(k)}) \right\}
  + \frac{1-\theta_2 }{\BF{A}}
  \end{align*}
at $\bx^{(k)}$. For any $j_k\in V_k$, $\varphi_{j_k}(\bx^{(k)}) \geq \delta_k   \BT{A^T(\bb-A\bx^{(k)})}$. Then, we have
\begin{align*}
\frac{\big|\bxi_k^T A^T (\bb - A\bx^{(k)})\big|^2}{\BT{A\bxi_k}}
&\geq \frac{\sum_{j_k\in V_k} \delta_k  \BT{A^T(\bb-A\bx^{(k)})} \BT{\bbe_{j_k}}  }{ \sigma_{\max}^2(A_{:, V_k})}\\
&  =  \frac{\delta_k   \BT{A^T(\bb-A\bx^{(k)})} \sum_{j_k\in V_k} \BT{\bbe_{j_k}}  }{ \sigma_{\max}^2(A_{:, V_k})}\\
&  =  \frac{\delta_k  \BT{A^TAA^\dag\bb-A^TA\bx^{(k)})} \sum_{j_k\in V_k} \BT{\bbe_{j_k}}  }{ \sigma_{\max}^2(A_{:, V_k})}\\
&\geq \delta_k \BF{A}  \frac{\sum_{j_k\in V_k} \BT{\bbe_j}}{\BF{A}}
\frac{\sigma_{\min}^2(A)}{ \sigma_{\max}^2(A_{:, V_k})}
\BT{A\bx^{(k)} - AA^\dag\bb},
\end{align*}
where the last line follows from Courant-Fischer theorem.
In addition, we know that
\begin{align*}
\delta_k \BF{A}
&\geq \theta_2\frac{\BF{A}}{\BT{A^T(\bb-A\bx^{(k)})}}
  \max\limits_{j\in [n]} \left\{ \varphi_{j}(\bx^{(k)}) \right\}
  + (1-\theta_2)\\
& =  \theta_2\frac{\BF{A}}{\sum_{j \in [n]} \varphi_{j}(\bx^{(k)}) \BT{\bbe_{j}}  }
  \max\limits_{j\in [n]} \left\{ \varphi_{j}(\bx^{(k)}) \right\}
  + (1-\theta_2)\\
& =  \theta_2\frac{\BF{A}}{\left(\sum_{j \in [n]} - \sum_{j \in \Omega_k}\right)\varphi_{j}(\bx^{(k)}) \BT{\bbe_{j}}  }
  \max\limits_{j\in [n]} \left\{ \varphi_{j}(\bx^{(k)}) \right\}
  + (1-\theta_2)\\
&\geq \theta_2\frac{\BF{A}}{\left(\sum_{j \in [n]} - \sum_{j \in \Omega_k}\right) \BT{\bbe_{j}}  }
  + (1-\theta_2)\\
& = \gamma_k{\color{blue},}
\end{align*}
which readily results in the estimate in \eqref{eq:thm-RGDC}.
\hfill
\end{proof}
\vskip 1ex
It follows from the fact
\begin{align*}
\BT{A^T(\bb-A\bx^{(k)})}
& = \sum_{j \in [n]} \varphi_{j}(\bx^{(k)}) \BT{\bbe_{j}}\\
& = \left(\sum_{j \in [n]} - \sum_{j \in \Omega_k}\right)\varphi_{j}(\bx^{(k)}) \BT{\bbe_{j}} \\
& \leq \max\limits_{j\in [n]} \left\{ \varphi_{j}(\bx^{(k)}) \right\} \left(\sum_{j \in [n]} - \sum_{j \in \Omega_k}\right) \BT{\bbe_{j}}\\
& = \varepsilon_k   \max\limits_{j\in [n]} \left\{ \varphi_{j}(\bx^{(k)}) \right\}
\end{align*}
that
\begin{align*}
\gamma_k \frac{\sum_{j_k\in V_k} \BT{\bbe_j}}{\BF{A}}
& =   \left(\theta_2 \frac{1}{\varepsilon_k} + (1-\theta_2)\frac{1}{\BF{A}}\right) \sum_{j_k\in V_k} \BT{\bbe_j}\\
&\leq \left(\theta_2 \frac{1}{\BT{A^T(\bb-A\bx^{(k)})}}\max\limits_{j\in [n]} \left\{ \varphi_{j}(\bx^{(k)}) \right\}+ (1-\theta_2)\frac{1}{\BF{A}}\right)\sum_{j_k\in V_k} \BT{\bbe_{j_k}}\\
& = \sum_{j_k\in V_k} \delta_k \BT{\bbe_{j_k}}\\
&\leq \sum_{j_k\in V_k} \frac{\varphi_{j_k}(\bx^{(k)})}{\BT{A^T(\bb-A\bx^{(k)})}} \BT{\bbe_{j_k}}\\
& = \sum_{j_k\in V_k} \frac{\big|  \bbe_{j_k}^T(\bb-A\bx^{(k)})  \big|^2}{\BT{A^T(\bb-A\bx^{(k)})}}\leq 1.
\end{align*}
Then, we have
\begin{align*}
0 \leq 1 - \gamma_k
\frac{\sum_{j_k\in V_k} \BT{\bbe_{j_k}}}{\BF{A}}
\frac{\sigma_{\min}^2(A)}{ \sigma_{\max}^2(A_{:, V_k})} <1.
\end{align*}

\begin{remark}
We remark that the factor of convergence of RGDR is smaller, uniformly with respect to the iteration step $k$, than that of RGRK given by Theorem \ref{thm:RGRK}. The reason is as follows. Since
\begin{align*}
\frac{\sum_{i_k\in U_k} \BT{\bal_{i_k}}}{\sigma_{\max}^2(A_{U_k,:})}
=\frac{\BF{A_{U_k,:}}}{\sigma_{\max}^2(A_{U_k,:})}>1
~and~
\BF{A} - \sum_{i\in \Pi_k} \BT{\bal_i} \leq \BF{A} - \min_{i\in [m]}\left\{\BT{\bal_i}\right\},
\end{align*}
i.e., $\tau_k\geq \tau$, it holds that
\begin{align*}
1 - \tau_k
\frac{\sum_{i_k\in U_k} \BT{\bal_{i_k}}}{\sigma_{\max}^2(A_{U_k,:})}
\frac{\sigma_{\min}^2(A)}{\BF{A}}
\leq
1 - \tau
\frac{\sigma_{\min}^2(A)}{\BF{A}}.
\end{align*}
In addition, it is a similar story to RGDC and RGRCD. The details here are omitted.
\end{remark}

\vskip 1em
\begin{remark}
Note that the RGDR method automatically reduces to the FDBK method \cite[Algorithm 1]{21CH} when $\theta_1=1/2$, we further generalize the FDBK method by introducing a relaxation parameter in the involved  row index set  \eqref{eq:RGRS}.  In this case, the error estimate of FDBK in \cite[Theorem 3.1]{21CH} is recovered if the index set $U_k=[m]$ or $U_k$ is the complement of $\Pi_k$.
\end{remark}

\vskip 1em

%
%

\section{Experimental results}\label{sec:ER}
To verify the effectiveness of RGDR and RGDC, two kinds of coefficient matrices are tested  as follows.
\vskip 1ex
\begin{example}\label{Ex:randn}
Consider the random matrix, generated  by the MATLAB built-in function, e.g.,
${\sf  randn}(m,n)$, it returns an $m$-by-$n$ matrix containing pseudorandom values drawn  from the standard normal distribution. In this test, the output matrix is given by $A = {\sf randn}(m,n)$.
\end{example}
\vskip 1ex
\begin{example}\label{Ex:Smatrix}
Like  \cite{19Du}, a dense  matrix is constructed  by $A = U\Sigma V^T\in \Rc^{m\times n}$,
where the entries of $U\in \Rc^{m\times r}$ and $V\in \Rc^{n\times r}$ are generated from a standard normal distribution,
and then their columns are orthonormalized. The matrix $\Sigma$ is an $r$-by-$r$ diagonal matrix whose first $r$-2 diagonal entries are uniformly distributed numbers in $ (\sigma_2, \sigma_1) $, and the last two diagonal entries are $\sigma_2$ and $\sigma_1$. The synthetic matrix is abbreviated as $ A = {\sf Smatrix} (m, n,r,\sigma_1,\sigma_2)$.
\end{example}
\vskip 1ex

 For comparison, in addition to RGRK and RGRCD, we also test several effective row and column iterative methods, such as the GBK method  \cite[Algorithm 1]{20NZ}, the randomized block Kaczmarz method (RBK) \cite[Algorithm 1.1]{14NT}, the randomized block CD (RBCD) method \cite[Algorithm 2]{15NZZ}, and the accelerated max-distance CD (AMDCD) method \cite[Algorithm 4]{20ZG}. These methods are briefly described below.

RBK and GBK are two special block Kaczmarz methods, which all admit  the following iteration scheme. For $k=0,1,2,\cdots$,
\begin{align*}
  \bx^{(k+1)}  = \bx^{(k)} +
  A_{I_k, :}^{\dag} (\bb_{I_k} - A_{I_k, :}\bx^{(k)}),
\end{align*}
where $I_k$ is a row index set. This process is identical to project the current iterate point $\bx^{(k)}$ orthogonally onto the solution space of $\{\bx|A_{I_k, :}\bx = \bb_{I_k}\}$. In RBK, $I_k$ is  selected uniformly at random from a partition of the row indices of $A$, which is defined by $\widehat{T} = \{\widehat{\tau}_0, \widehat{\tau}_1,\cdots, \widehat{\tau}_r \}$, where
$\widehat{\tau}_i = \{i\widehat{s}+1,i\widehat{s}+2,\cdots,i\widehat{s}+\widehat{s}\}$
with $i=0,1,\cdots,r-1$, $\widehat{\tau}_r = \left\{ r\widehat{s}+1,r\widehat{s}+2,\cdots,m\right\}$, and $\widehat{s}$ is the block size. In GBK, $I_k$ is determined by a greedy selection strategy. That is,
\begin{equation*}
  I_k=\left\{i_k \Big| \psi_{i_k}(\bx^{(k)}) \geq
  \eta_1\cdot \max_{i\in [m]}\left\{ \psi_i(\bx^{(k)}) \right\} , i_k\in [m]\right\}.
\end{equation*}

In RBCD, a partition of the column indices of $A$ is needed. It is achieved by $\widetilde{T} = \{\widetilde{\tau}_0, \widetilde{\tau}_1,\cdots, \widetilde{\tau}_c \}$, where
$\widetilde{\tau}_j = \{j\widetilde{s}+1,j\widetilde{s}+2,\cdots,j\widetilde{s}+\widetilde{s}\}$
with $j=0,1,\cdots,c-1$, $\widetilde{\tau}_c = \left\{ c\widetilde{s}+1,c\widetilde{s}+2,\cdots,n\right\}$, and $\widetilde{s}$ is the block size. After picking the block $\widetilde{\tau}_k$ uniformly at random from $\widetilde{T}$, the RBCD iterate is given by
\begin{align*}
  \bx^{(k+1)}_{\widetilde{\tau}_k}  = \bx^{(k)}_{\widetilde{\tau}_k} + \bw^{(k)}
\end{align*}
for $k=0,1,2,\cdots$, where $\bw^{(k)}=A_{:,{\widetilde{\tau}_k}}^{\dag} \bz^{(k)}$ and $\bz^{(k+1)}=\bz^{(k)}-A_{:,{\widetilde{\tau}_k}}\bw^{(k)}$ with $\bx^{(0)}={\bm 0}$ and $\bz^{(0)}={\bm b}$.

From the max-distance selection strategy, i.e., for $k=0,1,2,\cdots$,
\begin{equation*}
  J_k=\left\{j_k \Big|
  |D_{\max} - D_{j_k}| \leq \eta_2, j_k\in [n]\right\}~{\rm with}~
  D_j = \frac{|\bbe_j^T(\bb - A\bx^{(k)})|}{\bT{\bbe_j}}~{\rm and}~
  D_{\max} = \max_{j\in[n]} \left\{D_j\right\},
\end{equation*}
AMDCD uses a pseudoinverse-free method to update the next iterate. That is,
\begin{align*}
  \bx^{(k+1)} = \bx^{(k)}  + \sum_{j_k \in J_k} \frac{y_{j_k}^{(k)}}{\BT{\bbe_{j_k}}}\bnu_{j_k}
  ~{\rm with}~
  \by^{(k+1)} = \by^{(k)} - \sum_{j_k \in J_k} \frac{y_{j_k}^{(k)}}{\BT{\bbe_{j_k}}}\widehat{A}_{:,j_k}.
\end{align*}

Unless otherwise specified, at each iteration $k$, we keep track of the number of iteration steps (denoted by {\sf IT}) and the CPU time in seconds (denoted by {\sf CPU}).  For the randomized methods, e.g., RGRK and RGRCD, each {\sf IT} and {\sf CPU} are the arithmetic mean of the results obtained by repeatedly running the corresponding method $30$ times. All numerical tests are performed  on a Founder desktop PC with Intel(R) Core(TM) i5-7500 CPU 3.40 GHz.

Let the relative solution error (denoted by {\sf RSE}) be
${\sf RSE} =  \bT{\bx^{(k)} - \bx^{\ast} } \big /\bT{\bx^{(0)} - \bx^{\ast} }$ at $\bx^{(k)}$, where we set $\bx^{(0)}=\bm{0}$.  The experiments are terminated  once {\sf RSE} is less than $10^{-4}$, or {\sf IT} exceeds $10^{6}$.
 To make the implementations of RGDR and RGDC more efficient, we try to avoid using for-loop structure as far as possible at each iteration. In addition, we execute RGDR and RGDC without explicitly forming the matrices $\widetilde{A}$ and $\widehat{A}$. Note that RGDR is equivalent to  FDBK  \cite{21CH} when the relaxation parameter $\theta_1 = 0.5$.  We take the parameters $\eta_1=0.5$ in GBK and $\eta_2=0.1$ in AMDCD as the same values as that used in \cite{20NZ} and \cite{20ZG}, respectively. In RBK and RBCD, the block size is set by $\widehat{s} = \widetilde{s} = 100$. To avoid calculating the Moore-Penrose inverse of $A_{I_k, :}$ in RBK and GBK and $A_{:,{\widetilde{\tau}_k}}$ in RBCD, we use the CGLS algorithm \cite{Saad2000}  to solve the least-squares problem. In the following
 the symbols RGDR($\theta$) and RGRK($\theta$) respectively represent the RGDR and RGRK methods with parameter $\theta$. This notation also applies to RGDC and RGRCD. The numerical results for solving the consistent and  inconsistent linear systems are reported.

\vskip 1ex
{\textbf{Case~1: the overdetermined and consistent problems.}}  To ensure that the linear system \eqref{eq:Ax=b} is consistent, the solution vector $\bx^{\ast}\in \Rc^n$ with entries randomly generated by the
MATLAB function, e.g., {\sf randn(n,1)}, is formed at first, and  the right-hand-side is then set to be $\bb = A\bx^{\ast}$.
The numerical results are reported in Tables \ref{tab:randn-GRDRvsRGRK} and \ref{tab:randn-GRDCvsRGRCD}.  It yields the following observations.

Both RGDR and RGDC can converge to the  solution of the overdetermined and consistent linear system. RGDR (resp., RGDC) vastly outperforms RGRK (resp., RGRCD) in terms of both iteration counts and CPU times if they choose a same relaxation parameter. We also find that RGDR (resp., RGDC) works better than GBK and RBK (resp., AMDCD and RBCD)  in terms of CPU times.

 We plot the curves of CPU times versus the relaxation parameter $\theta_1$ and $\theta_2$ for RGDR and RGDC in Figure \ref{fig:THETAvsCPUrandn}. In this figure, we see that the number of iteration steps is increasing when the sizes of $A$ are fixed but the relaxation parameter is grown, while the CPU time is decreasing firstly and then increasing. This is because the smaller the relaxation parameter is, the larger the size of $U_k$ or $V_k$ is. When $\theta_1=0.7$,  RGDR  is more effective than FDBK in terms of CPU times in all cases, which means that RGDR can further improve the efficiency of  FDBK.

\begin{table}[!ht]
 \normalsize
    \caption{ The numerical results of IT and CPU for RGRK \cite{18BW2}, RBK \cite{14NT}, GBK \cite{20NZ}, and RGDR, where the coefficient matrix is  from Example \ref{Ex:randn} with $ A = {\sf randn} (m, 300)$ and $\bb = A\bx^{\ast}$.}
    \centering
    \begin{tabular}{l l c c c c c }
    \hline
       &$m$  & $5000$ & $8000$ & $10000$ & $12000$ & $15000$ \\
    \hline
    RBK & {\sf RSE }&$9.28\times 10^{-5}$&$9.09\times 10^{-5}$&$9.91\times 10^{-5}$&$9.94\times 10^{-5}$&$9.65\times 10^{-5}$ \\
        & {\sf IT } &46.2&46.1&46.8&46.7&46.0\\
        & {\sf CPU} &0.2667&0.2822&0.3010&0.3094& 0.3713\\
    \hline
    GBK & {\sf RSE } &$8.37\times 10^{-5}$&$9.26\times 10^{-5}$&$8.21\times 10^{-5}$&$8.41\times 10^{-5}$&$6.45\times 10^{-5}$\\
        & {\sf IT } &20.3&19.4&18.6&15.9&14.2\\
        & {\sf CPU} &0.0780&0.0639&0.0760&0.0800&0.0878\\
    \hline
RGRK(0.3)& {\sf RSE }&$9.99\times 10^{-5}$&$9.95\times 10^{-5}$&$9.86\times 10^{-5}$&$9.90\times 10^{-5}$&$9.97\times 10^{-5}$\\
         & {\sf IT } &867.0&818.6&778.6&774.3&753.7\\
         & {\sf CPU} &0.1319&0.1482&0.1645&0.1906&0.2142\\
RGRK(0.5)& {\sf RSE } &$9.99\times 10^{-5}$&$9.99\times 10^{-5}$&$9.88\times 10^{-5}$&$9.95\times 10^{-5}$&$9.88\times 10^{-5}$\\
         & {\sf IT } &678.9&614.7&591.4&581.6&555.0\\
         & {\sf CPU} &0.1016&0.1098&0.1196&0.1551&0.0967 \\
RGRK(0.7)& {\sf RSE } &$9.87\times 10^{-5}$&$9.90\times 10^{-5}$&$9.82\times 10^{-5}$&$9.88\times 10^{-5}$&$9.89\times 10^{-5}$\\
         & {\sf IT } &607.8&532.6&506.7&486.9&466.1\\
         & {\sf CPU} &0.0811&0.1038&0.1014&0.1353&0.0867\\
RGRK(0.9)& {\sf RSE } &$9.97\times 10^{-5}$&$9.96\times 10^{-5}$&$9.97\times 10^{-5}$&$9.99\times 10^{-5}$&$9.92\times 10^{-5}$\\
         & {\sf IT } &573.2&498.9&470.7&452.0&431.5\\
         & {\sf CPU} &0.0683&0.0838&0.0937&0.1037&0.0623\\
   \hline
RGDR(0.3)& {\sf RSE }&$7.78\times 10^{-5}$&$5.25\times 10^{-5}$&$7.48\times 10^{-5}$&$8.23\times 10^{-5}$&$6.40\times 10^{-5}$\\
         & {\sf IT } &15&12&12&11&9 \\
         & {\sf CPU} &0.0906&0.1280&0.1361&0.1721&0.2089 \\
RGDR(0.5)& {\sf RSE }&$9.73\times 10^{-5}$&$9.68\times 10^{-5}$&$8.63\times 10^{-5}$&$6.45\times 10^{-5}$&$7.30\times 10^{-5}$\\
         & {\sf IT } &29&23&23&21&19\\
         & {\sf CPU} &0.0517&0.0687&0.0724&0.1034&0.0711\\
RGDR(0.7)& {\sf RSE }&$8.59\times 10^{-5}$&$9.72\times 10^{-5}$&$7.39\times 10^{-5}$&$8.84\times 10^{-5}$&$9.19\times 10^{-5}$\\
         & {\sf IT } &66&56&54&54&45\\
         & {\sf CPU} &0.0431&0.0499&0.0548&0.0740&0.0419\\
RGDR(0.9)& {\sf RSE }&$9.77\times 10^{-5}$&$9.99\times 10^{-5}$&$9.55\times 10^{-5}$&$9.86\times 10^{-5}$&$9.69\times 10^{-5}$\\
         & {\sf IT } &219&205&182&182&160\\
         & {\sf CPU} &0.0562&0.0597&0.0660&0.0715&0.0364\\
    \hline
    \end{tabular}
    \label{tab:randn-GRDRvsRGRK}
\end{table}

\begin{table}[!ht]
 \normalsize
    \caption{ The numerical results of IT and CPU for RGRCD \cite{20ZG}, RBCD \cite{15NZZ}, AMDCD \cite{20ZG}, and RGDC, where the coefficient matrix is  from Example \ref{Ex:randn} with $ A = {\sf randn} (m, 300)$ and $\bb = A\bx^{\ast}$.}
    \centering
    \begin{tabular}{l l c c c c c }
    \hline
       &$m$  & $5000$ & $8000$ & $10000$ & $12000$ & $15000$ \\
    \hline
    RBCD & {\sf RSE }&$6.15\times 10^{-5}$&$6.10\times 10^{-5}$&$4.08\times 10^{-5}$&$8.66\times 10^{-5}$&$9.41\times 10^{-5}$ \\
        & {\sf IT } &61.2&70.8&79.0&61.2&54.2\\
        & {\sf CPU} &0.4839&0.7934&1.3666&1.2402&2.6673\\
    \hline
    AMDCD & {\sf RSE } &$5.81\times 10^{-5}$&$7.17\times 10^{-5}$&$5.32\times 10^{-5}$&$2.92\times 10^{-5}$&$1.96\times 10^{-5}$ \\
        & {\sf IT } &502.1&488.9&474.2&465.0&455.9\\
        & {\sf CPU} &0.0955&0.0883&0.0863&0.0803&0.0810\\
    \hline
RGRCD(0.3)& {\sf RSE }&$9.97\times 10^{-5}$&$9.97\times 10^{-5}$&$9.92\times 10^{-5}$&$9.88\times 10^{-5}$&$9.96\times 10^{-5}$ \\
         & {\sf IT } &1461.1&1214.9&1158.0&1128.8& 1065.2\\
         & {\sf CPU} &0.0247&0.0220&0.0230&0.0200&0.0197\\
RGRCD(0.5)& {\sf RSE }&$9.99\times 10^{-5}$&$9.97\times 10^{-5}$&$9.97\times 10^{-5}$&$9.95\times 10^{-5}$&$\times 10^{-5}$ \\
         & {\sf IT } &1454.5&1186.2&1152.7&1119.0&1061.3\\
         & {\sf CPU} &0.0245&0.0211&0.0221&0.0186& 0.0190\\
RGRCD(0.7)& {\sf RSE }&$9.95\times 10^{-5}$&$9.96\times 10^{-5}$&$\times 10^{-5}$&$9.95\times 10^{-5}$&$9.99\times 10^{-5}$ \\
         & {\sf IT } &1453.4&1178.4&1149.3&1112.5&1053.1\\
         & {\sf CPU} &0.0230&0.0197&0.0214&0.0185&0.0170\\
RGRCD(0.9)& {\sf RSE }&$9.98\times 10^{-5}$&$9.95\times 10^{-5}$&$9.97\times 10^{-5}$&$9.99\times 10^{-5}$&$9.98\times 10^{-5}$ \\
         & {\sf IT } &1450.2&1167.4&1141.6&1104.7&1055.3\\
         & {\sf CPU} &0.0216&0.0167&0.0200&0.0171&0.0165\\
   \hline
RGDC(0.3)& {\sf RSE }&$8.78\times 10^{-5}$&$8.83\times 10^{-5}$&$8.71\times 10^{-5}$&$9.12\times 10^{-5}$&$9.14\times 10^{-5}$ \\
         & {\sf IT } &31&27&25&24&23\\
         & {\sf CPU} &0.0060&0.0076&0.0070&0.0060&0.0073\\
RGDC(0.5)& {\sf RSE }&$9.27\times 10^{-5}$&$9.58\times 10^{-5}$&$8.31\times 10^{-5}$&$8.66\times 10^{-5}$&$9.53\times 10^{-5}$ \\
         & {\sf IT } &52&45&43&40&37\\
         & {\sf CPU} &0.0053&0.0053&0.0051&0.0041&0.0043\\
RGDC(0.7)& {\sf RSE }&$9.68\times 10^{-5}$&$9.73\times 10^{-5}$&$9.10\times 10^{-5}$&$9.59\times 10^{-5}$&$9.88\times 10^{-5}$ \\
         & {\sf IT } &97&80&76&73&68\\
         & {\sf CPU} &0.0041&0.0051&0.0032&0.0040&0.0040\\
RGDC(0.9)& {\sf RSE }&$9.90\times 10^{-5}$&$9.93\times 10^{-5}$&$9.88\times 10^{-5}$&$9.49\times 10^{-5}$&$9.71\times 10^{-5}$ \\
         & {\sf IT } &278&232&225&206&194\\
         & {\sf CPU} &0.0076&0.0093&0.0068&0.0073&0.0064\\
    \hline
    \end{tabular}
    \label{tab:randn-GRDCvsRGRCD}
\end{table}

\begin{figure}[!ht]
\setlength{\abovecaptionskip}{0pt}
\setlength{\belowcaptionskip}{0pt}
\centering
	\subfigure[RGDR]{\includegraphics[width=0.5\textwidth]{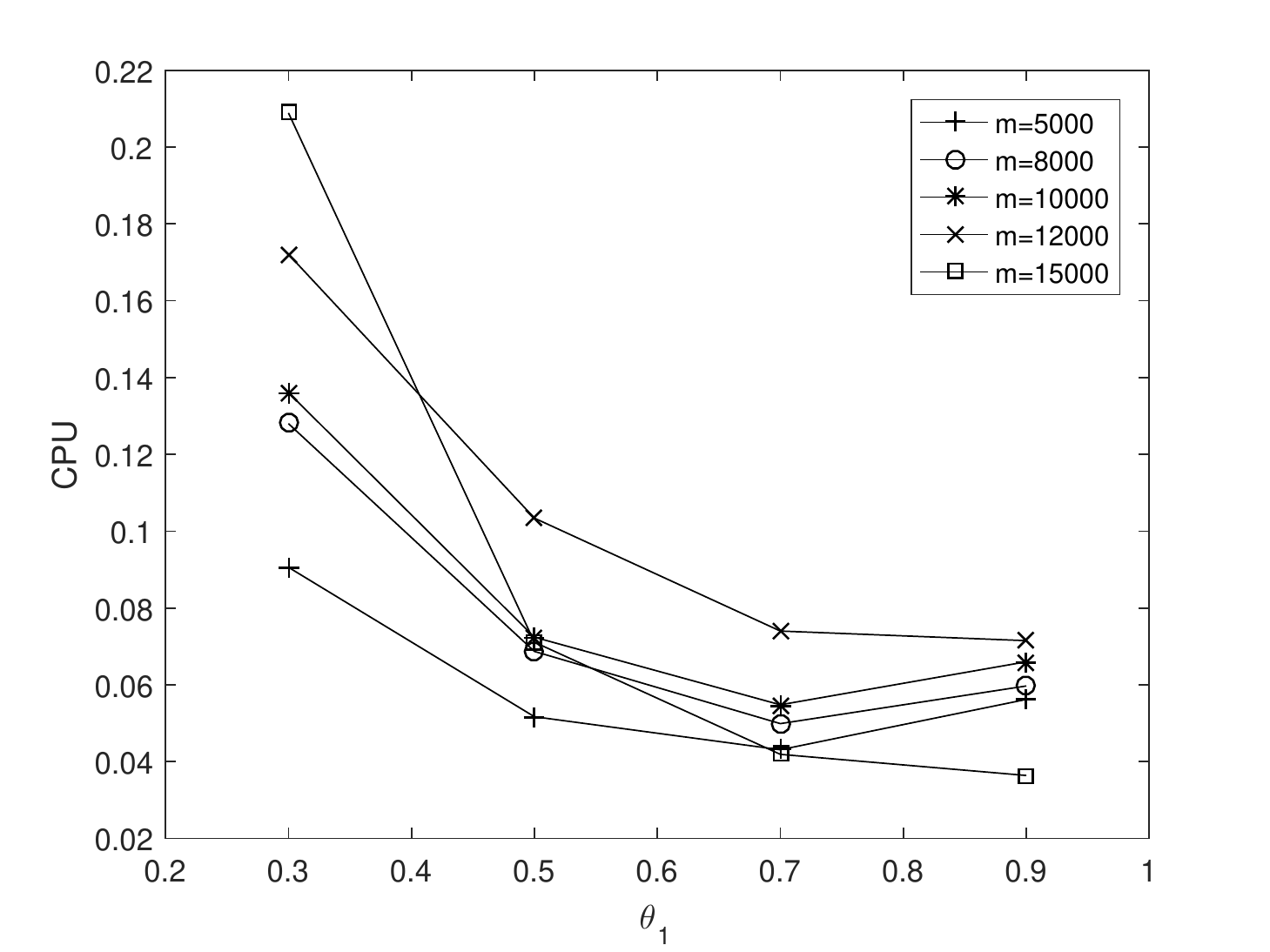}}
 \hspace{-0.5cm}
 	\subfigure[RGDC]{\includegraphics[width=0.5\textwidth]{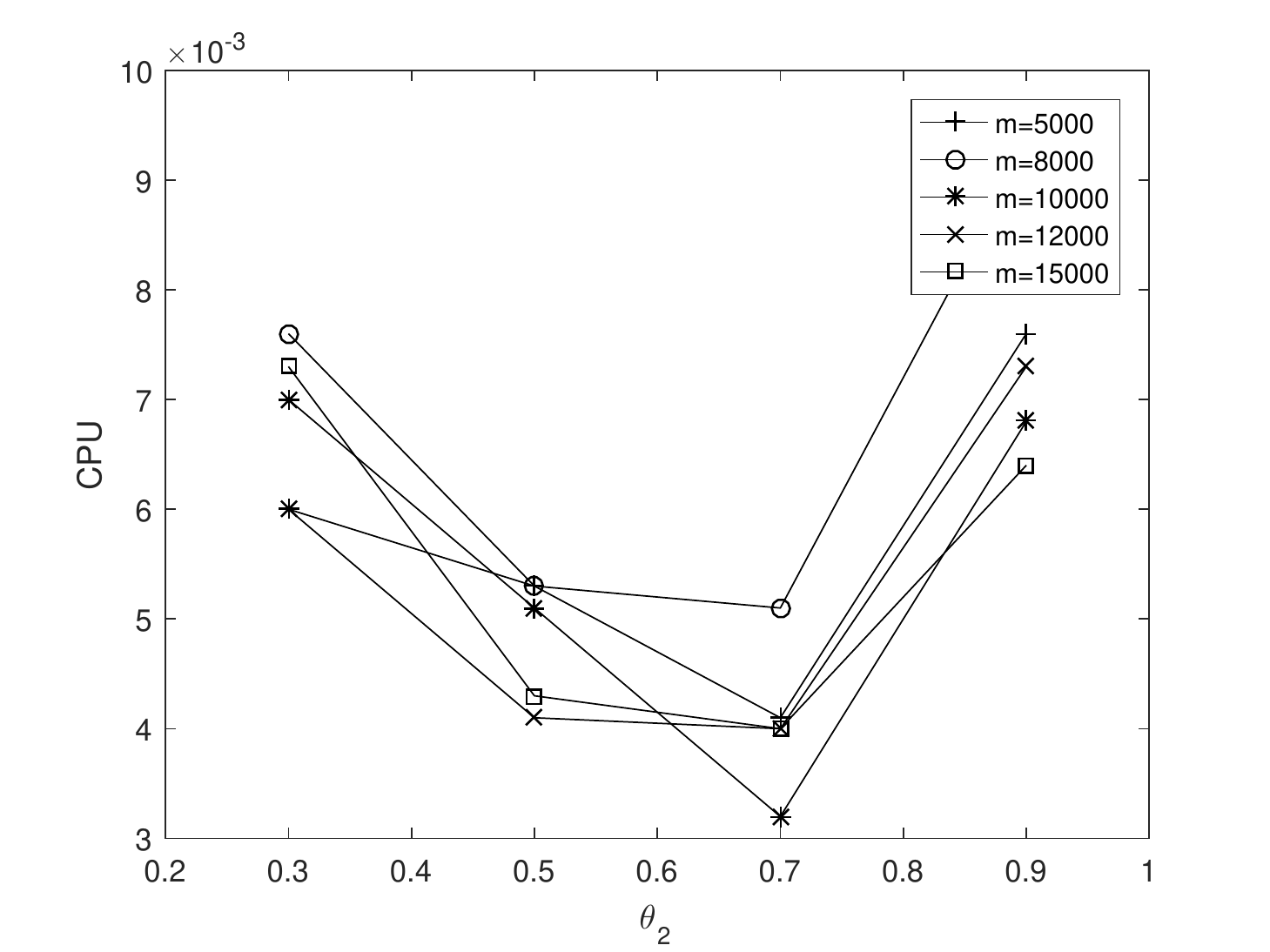}}
\caption{ The curves of relaxation parameter  versus  CPU for RGDR (left) and RGDC (right), where the coefficient matrix is  from Example \ref{Ex:randn} with $ A = {\sf randn} (m, 300)$ and $\bb = A\bx^{\ast}$.}
  \label{fig:THETAvsCPUrandn}
\end{figure}

\vskip 1ex
{\textbf{Case~2: the overdetermined and inconsistent problems.}}  For the inconsistent case, the  noisy right-hand side is set by $ \bb  = A \bx^{\ast} + \bde\bb$, where $\bde\bb$ belongs to the null space of $A^T$ and is generated by the   MATLAB function, e.g.,  {\sf null}.  We plot the curves of the relative solution error  versus  CPU time for RGDC($\theta_2$) and RGRCD($\theta_2$) with $\theta_2=0.3$, $0.5$, $0.7$ and $0.9$, as shown in Figures \ref{fig:RGDCvsRGRCDnoisy+randn} and \ref{fig:RGDCvsRGRCDnoisy+smatrix}. We observe that  RGDC converges linearly to the least-squares solution when the system is inconsistent, and  the relative solution error of RGDC is decaying much more quickly than that of RGRCD when the CPU time is increasing  if they choose a same relaxation parameter.

\begin{figure}[!ht]
\setlength{\abovecaptionskip}{0pt}
\setlength{\belowcaptionskip}{0pt}
\centering
	\subfigure[$\theta_2=0.3$]{\includegraphics[width=0.5\textwidth]{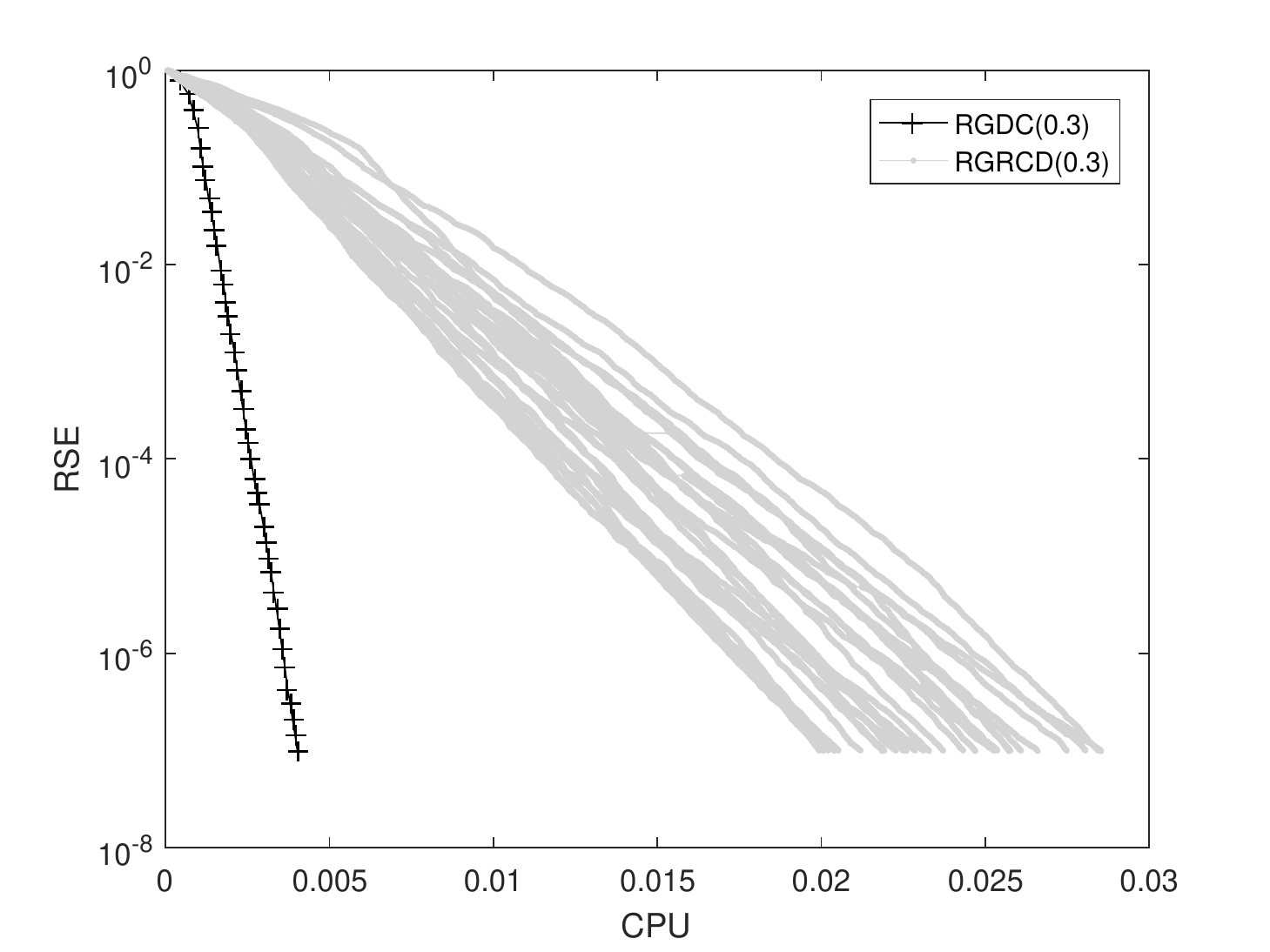}}
 \hspace{-0.5cm}
 	\subfigure[$\theta_2=0.5$]{\includegraphics[width=0.5\textwidth]{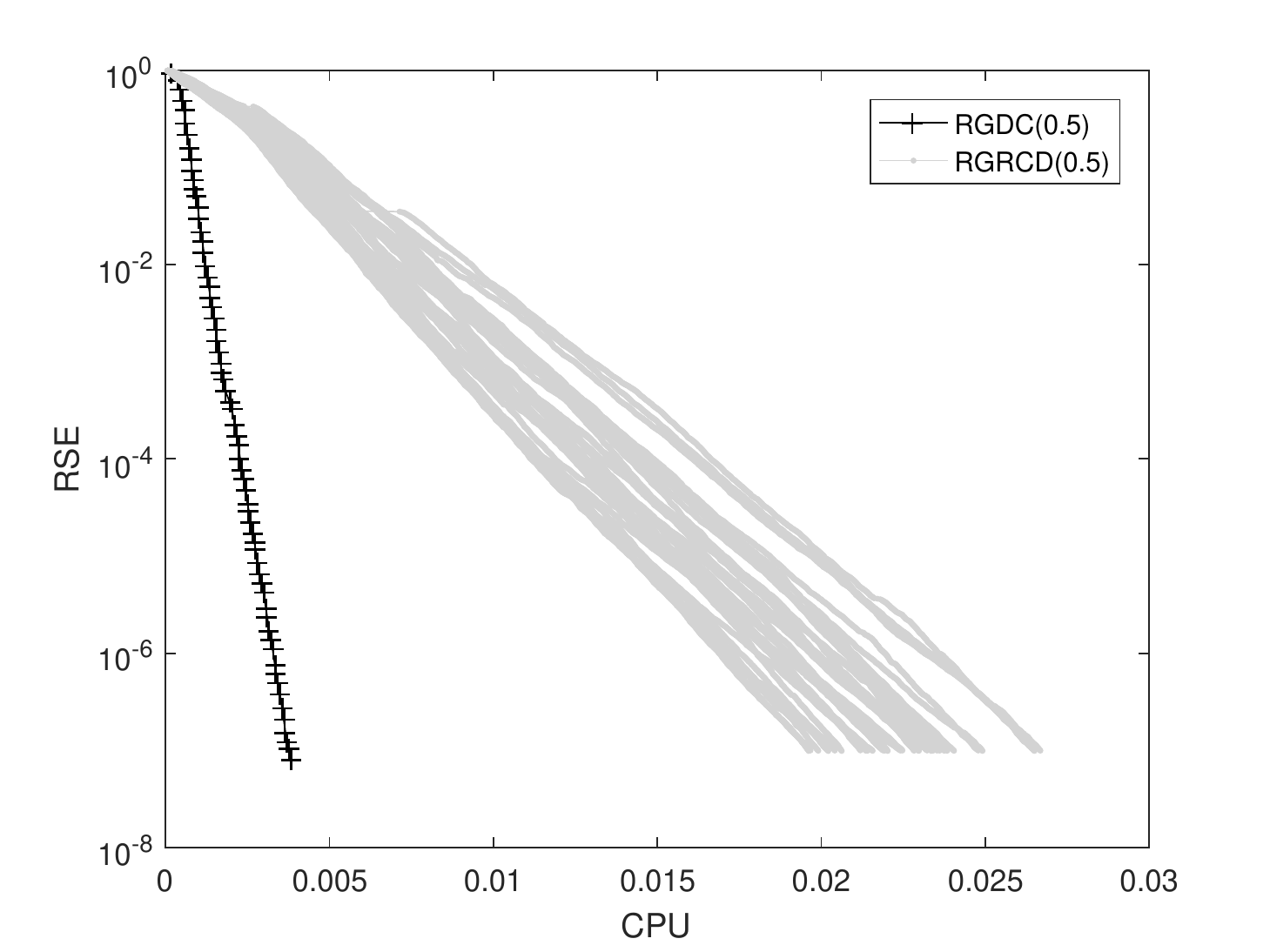}}
 \hspace{5cm}
 	\subfigure[$\theta_2=0.7$]{\includegraphics[width=0.5\textwidth]{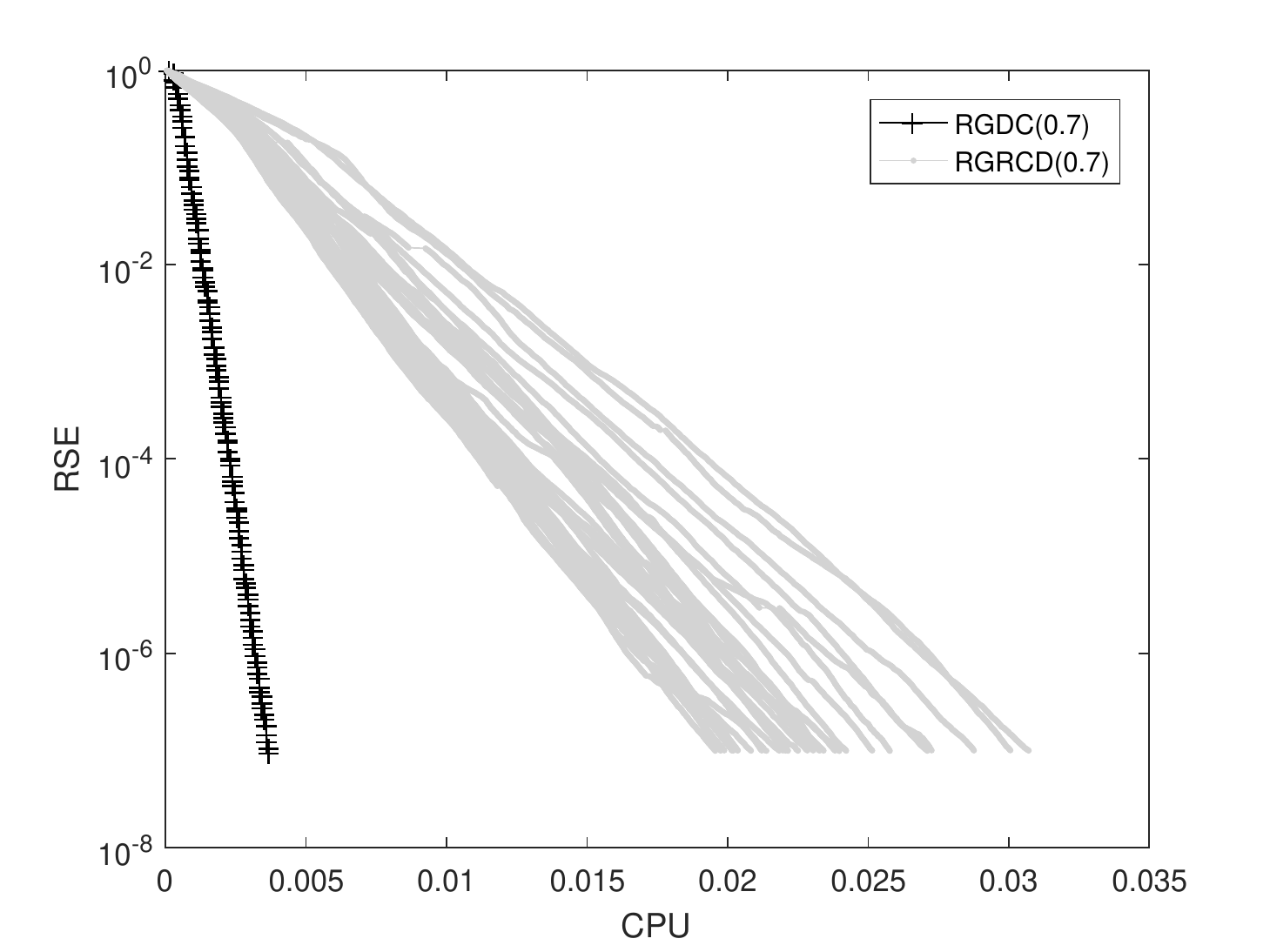}}
 \hspace{-0.5cm}
 	\subfigure[$\theta_2=0.9$]{\includegraphics[width=0.5\textwidth]{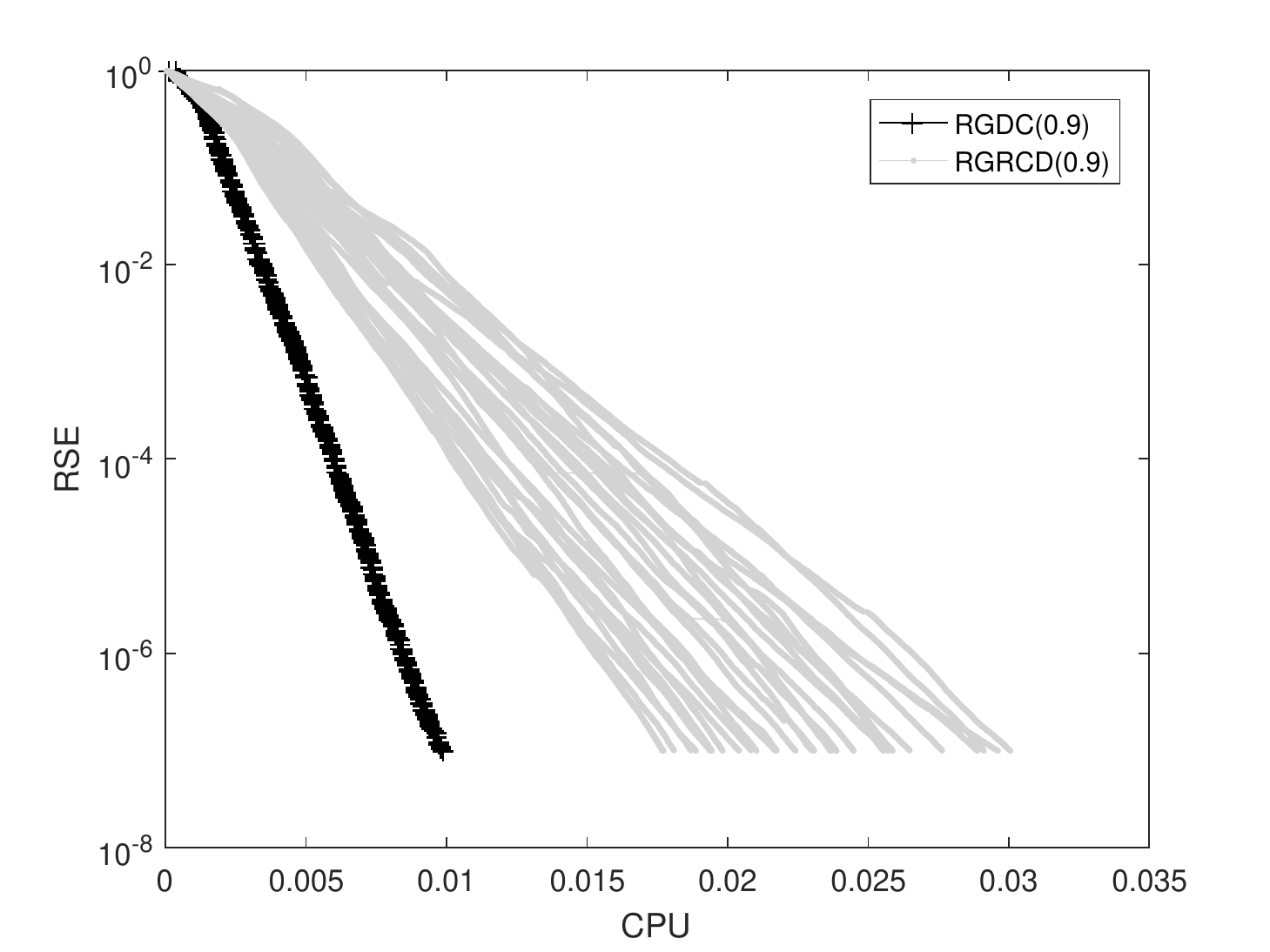}}
\caption{ The convergence curves of RSE  versus  CPU for RGDC($\theta_2$) and RGRCD($\theta_2$) with $\theta_2=0.3$, $0.5$, $0.7$ and $0.9$, where the coefficient matrix is  from Example \ref{Ex:Smatrix} with $A = {\sf Smatrix} (10000, 300, 300, 1.25, 1)$ and $\bb  = A \bx^{\ast} + \bde\bb$.}
  \label{fig:RGDCvsRGRCDnoisy+randn}
\end{figure}

\begin{figure}[!ht]
\setlength{\abovecaptionskip}{0pt}
\setlength{\belowcaptionskip}{0pt}
\centering
	\subfigure[$\theta_2=0.3$]{\includegraphics[width=0.5\textwidth]{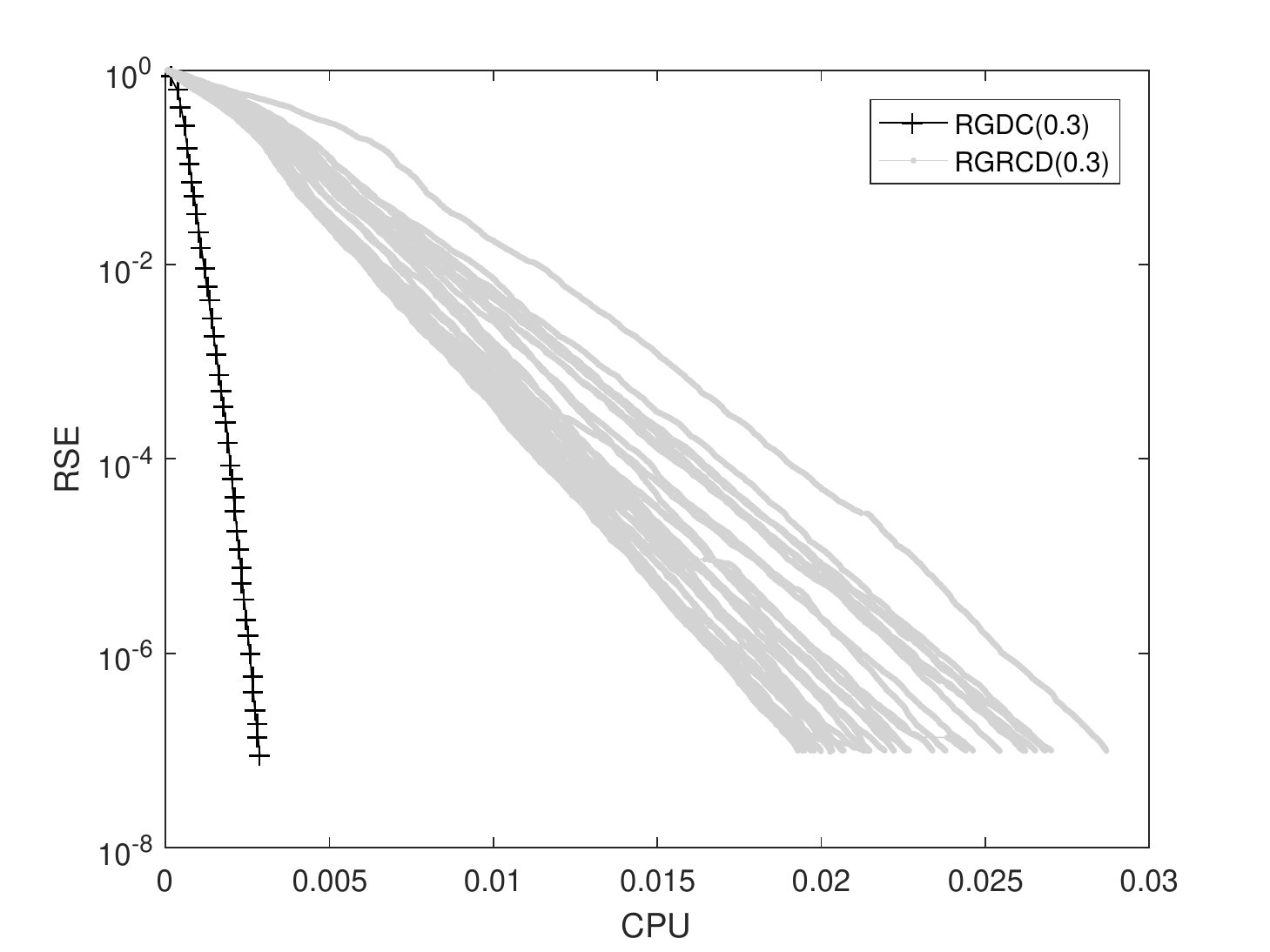}}
 \hspace{-0.5cm}
 	\subfigure[$\theta_2=0.5$]{\includegraphics[width=0.5\textwidth]{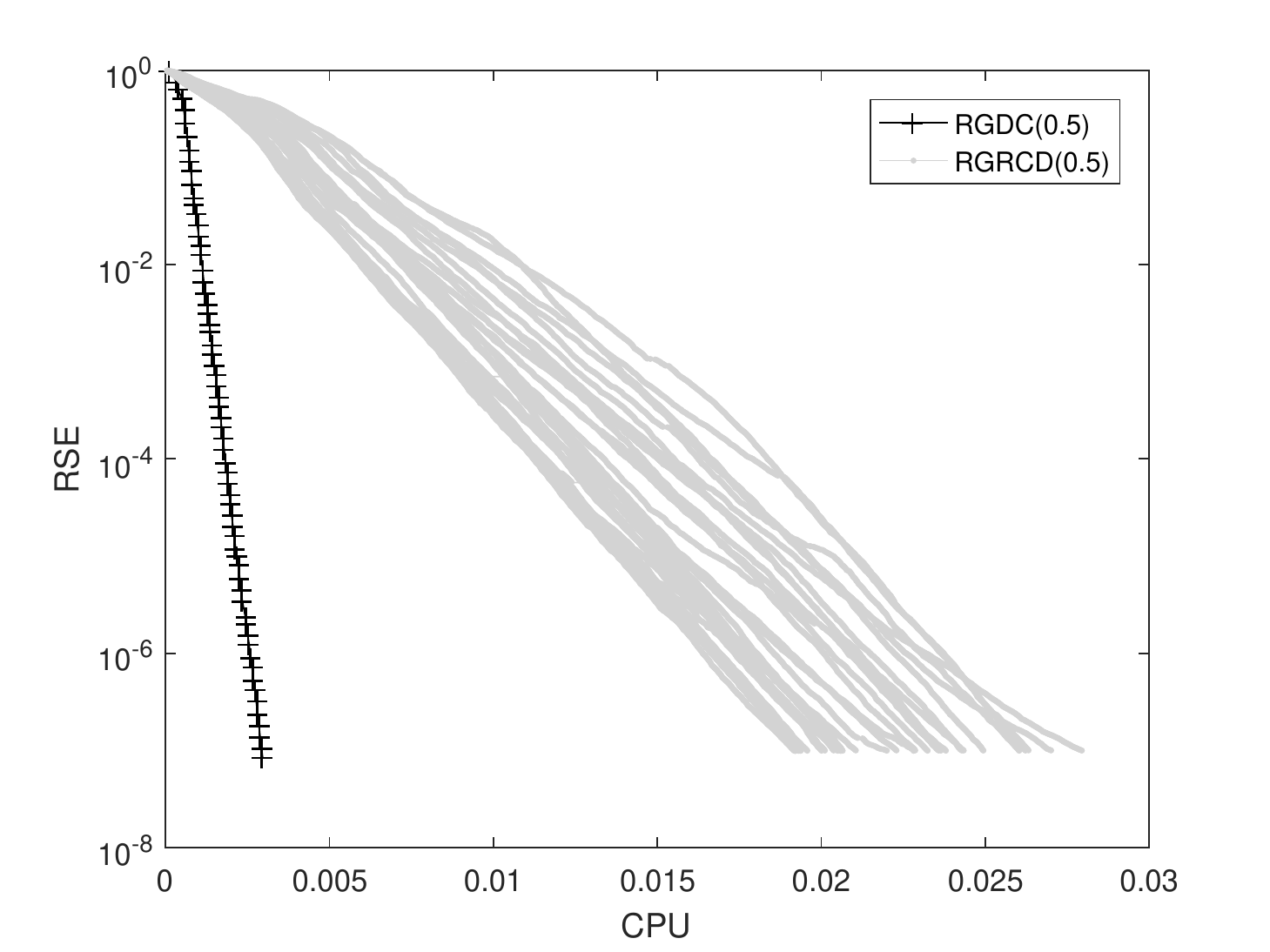}}
 \hspace{5cm}
 	\subfigure[$\theta_2=0.7$]{\includegraphics[width=0.5\textwidth]{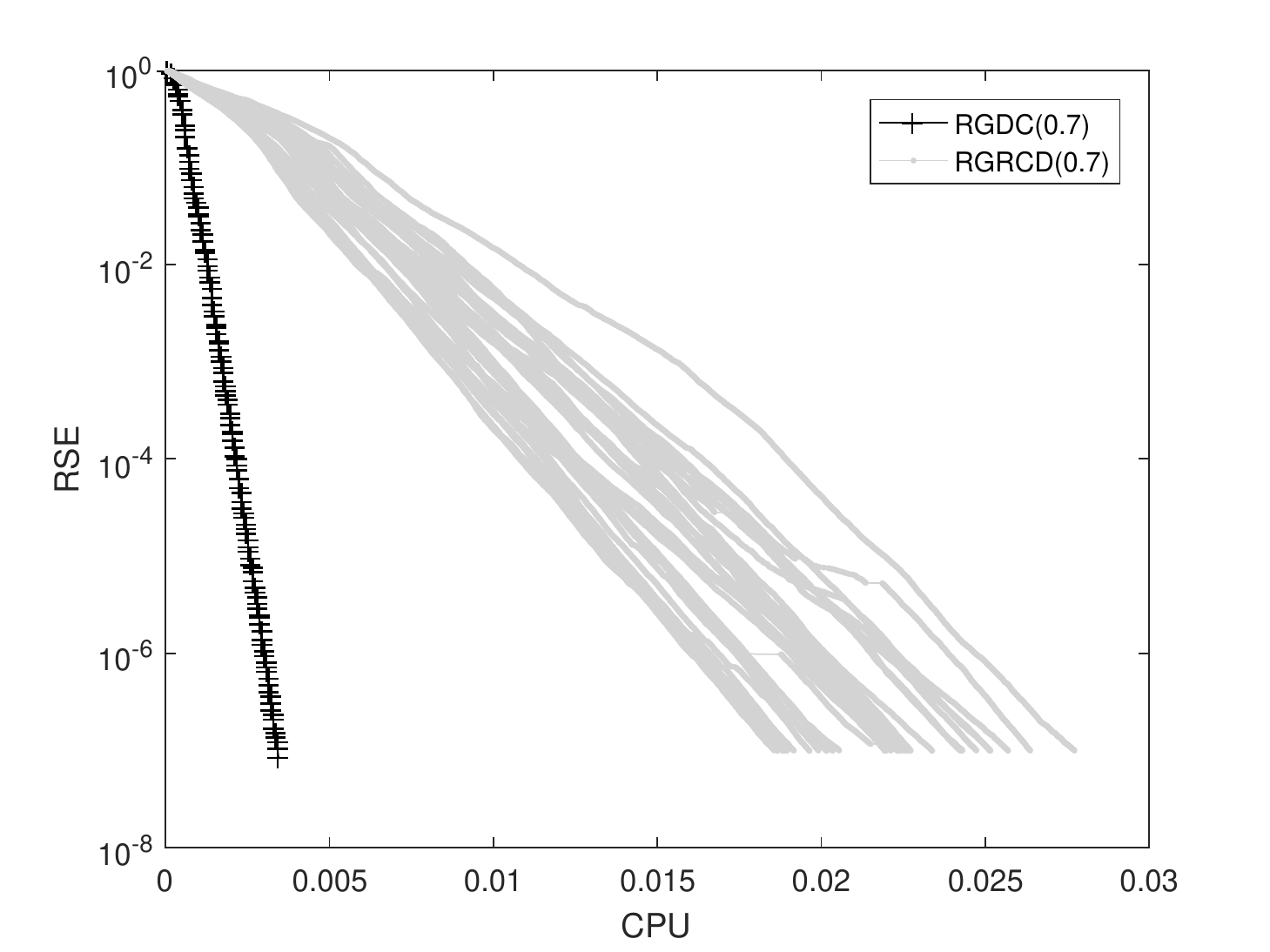}}
 \hspace{-0.5cm}
 	\subfigure[$\theta_2=0.9$]{\includegraphics[width=0.5\textwidth]{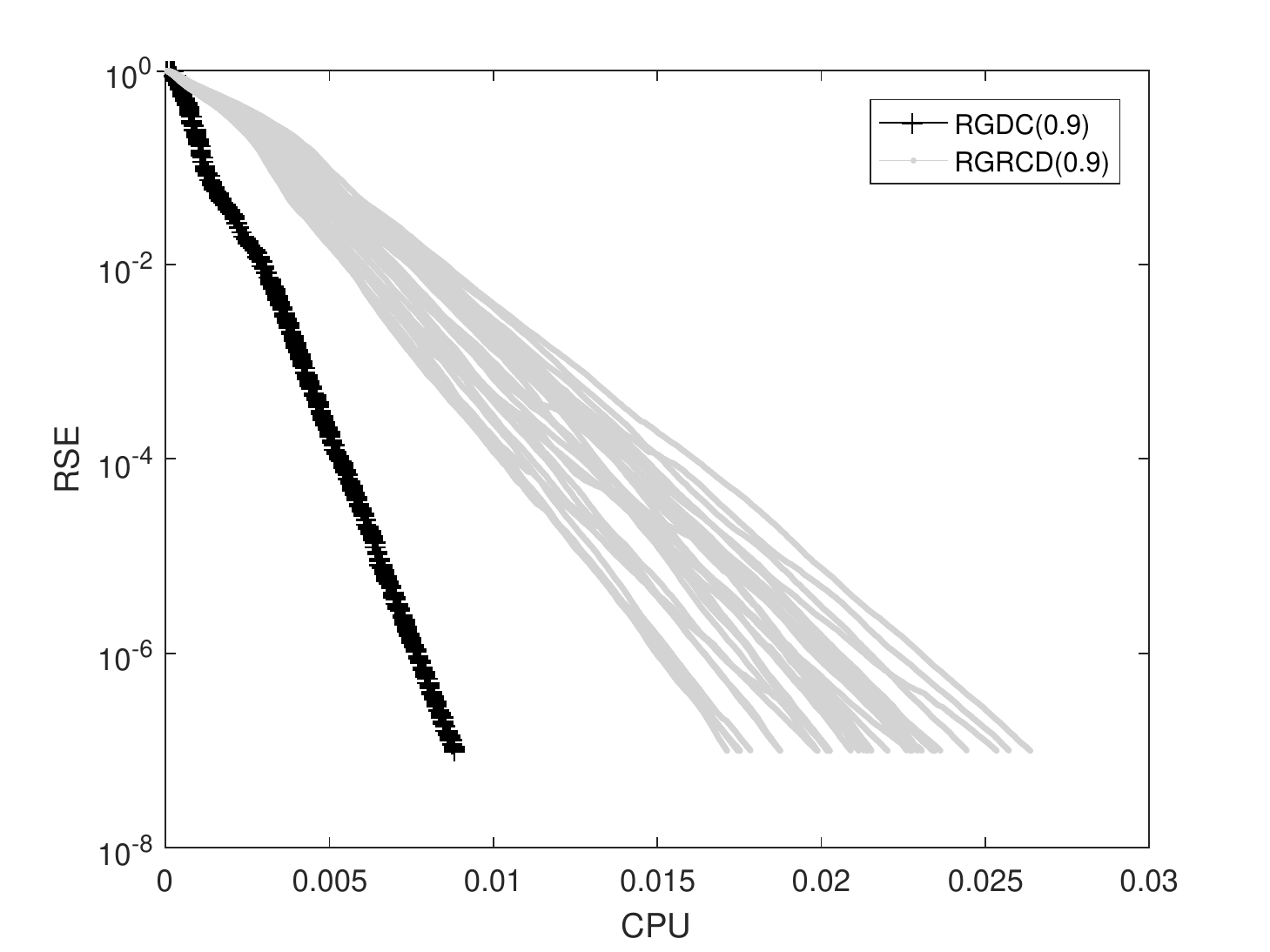}}
\caption{ The convergence curves of RSE  versus  CPU for RGDC($\theta_2$) and RGRCD($\theta_2$) with $\theta_2=0.3$, $0.5$, $0.7$ and $0.9$, where the coefficient matrix is  from Example \ref{Ex:Smatrix} with $A = {\sf Smatrix} (15000, 300, 300, 1.25, 1)$ and $\bb  = A \bx^{\ast} + \bde\bb$.}
  \label{fig:RGDCvsRGRCDnoisy+smatrix}
\end{figure}

\section{Conclusions}
For guaranteeing the fast convergence of row  and column methods, one crucial point is to introduce a practical and appropriate  criterion used to select the working rows or columns from the coefficient matrix.  In this work we present two relaxed greedy deterministic row and column    methods.   Our approaches are based on several ideas and tools, including Petrov-Galerkin conditions and the promising adaptive index selection method, i.e., relaxed greedy selection strategy, which further generalize the RGRK \cite{18BW1}, RGRCD \cite{20ZG} and FDBK \cite{21CH} methods. Our convergence analyses reveal that the resulting algorithms  all have the linear convergence rates, which are bounded by the explicit expressions.

 Theoretically, we can choose the relaxation parameters to be any positive constant from $(0,1)$. However, as  is shown in the previous sections, the convergence rate of RGDR (resp., RGDC) is seriously dependent on the choices of $\theta_1$ (resp., $\theta_2$). In our numerical tests, we take experimentally an exhaustive strategy and find that RGDR (resp., RGDC) works better than RGRK, GBK and RBK (resp., RGRCD, AMDCD and RBCD). It is an important and hard task to find the optimal relaxation parameter which strongly depends on the concrete structures and properties of the coefficient matrix and the relaxed greedy selection strategy, and needs further in-depth study from the viewpoint of
both theory and computations.

\section*{Acknowledgment}
 The authors are very much indebted to the referees for their constructive comments and valuable suggestions, which
greatly improved the original manuscript of this paper.
The work of the first author would like to thank the support of the Fundamental Research Funds for the Central Universities, South-Central University for Nationalities under grant CZQ21027, the second and third authors would like to thank the supports of the National Natural Science Foundation of China under grants  11571265 and 11661161017.


\end{document}